\newtheorem{prop}{Proposition}
\newtheorem{theorem}{Theorem}
\newtheorem{lemma}{Lemma}
\newtheorem{corollary}[theorem]{Corollary}
\newtheorem{example}{Example}
\newcommand{\N}{\ensuremath{\mathbb{N}}}
\newcommand{\R}{\ensuremath{\mathbb{R}}}
\newcommand{\E}{\ensuremath{\mathbb{E}}}
\newcommand{\p}{\ensuremath{\mathbb{P}}}
\newcommand{\C}{\ensuremath{\mathfrak{C}}}
\def\e{{\mathrm{e}}}
\definecolor{darkgreen}{rgb}{0, .5, 0}
\definecolor{darkred}{rgb}{.5, 0, 0}
\title{Robustness of Hilbert space-valued stochastic volatility models}
\author{Fred Espen Benth and Heidar Eyjolfsson}
\date{Draft: \today}
\thanks{We are grateful to Dennis Schroers for valuable discussions and insight. The Thematic Research Group SPATUS, funded by UiO:Energy, is acknowledged.} 
\begin{document}

\begin{abstract}
In this paper we show that Hilbert space-valued stochastic models are robust with respect to perturbation, due to measurement or approximation errors, in the underlying volatility process. Within the class of stochastic volatility modulated  Ornstein-Uhlenbeck processes, we quantify the error induced by the volatility in terms of perturbations in the parameters of the volatility process. We moreover study the robustness of the volatility process itself with respect to finite dimensional approximations of the driving compound Poisson process and semigroup generator respectively, when considering operator-valued Barndorff-Nielsen and Shephard stochastic volatility models. We also give results on square root approximations. In all cases we provide explicit bounds for the induced error in terms of the approximation of the underlying parameter. We discuss some applications to robustness of prices of options on forwards and volatility. 
\end{abstract}

\maketitle

\section{Introduction}
In mathematical finance, the dynamics of forward and futures prices may be modelled as an infinite dimensional stochastic process taking values in some suitable Hilbert space. To capture probabilistic features of the price evolution, operator-valued stochastic volatility models have recently gained some attention in the literature. In this paper, we address the question of robustness of Hilbert-valued stochastic processes with respect to perturbations in the underlying stochastic volatility dynamics.     

The HJM-approach in fixed-income theory (see Heath, Jarrow and Morton \cite{HJM} and Filipovi\'c \cite{Filip}) has been adopted into commodity and energy futures price models (see Benth, \v{S}altyt\.e Benth and Koekebakker \cite{BSBK}). Using the so-called Musiela-parametrization, the forward price $f(t,x)$ at time $t\geq 0$ of a contract delivering delivering a commodity (say oil or coffee beans) in time $x\geq 0$ defines a stochastic process $(f(t,\cdot))_{t\geq 0}$ which takes values in a Hilbert space of real-valued functions on $\mathbb R_+$. By no-arbitrage considerations, it may be modelled as the solution of a stochastic partial differential equation 
\begin{equation}
\label{spde-intro}
df(t,x)=\partial_x f(t,x)dt+\sigma(t)dB(t).
\end{equation}
Here, $\partial_x$ is the derivative-operator with respect to $x$, $B$ is a Wiener process in the Hilbert space and $\sigma(t)$ is some suitable operator-valued stochastic process acting on elements of the Hilbert space. 
We refer to Peszat and Zabczyk \cite{PeZa} for details on such stochastic partial differential equations, and Benth and Kr\"uhner \cite{BeKr1} for a current account on the application to energy and commodity markets. We remark in passing that our analysis is also relevant in more exotic weather derivatives markets as well as fixed-income theory. Moreover, ambit fields as defined in Barndorff-Nielsen, Benth and Veraart \cite{BNBV1} can be re-cast as solutions of equations like \eqref{spde-intro} (see Benth and Eyjolfsson \cite{BeEy17}), which ties our work to a rather general class of random field models where stochastic volatility plays an important role (e.g. turbulence, see \cite{BNBV1}). Ambit fields have been applied to futures price modeling in Di Persio and Perin \cite{DiP-Perin} and Barndorff-Nielsen, Benth and Veraart \cite{BNBV0}.  

In recent years there has been a growing interest in developing and analysing infinite-dimensional stochastic volatility models, that is, stochastic dynamics for $\sigma$ taking values in a suitable space of operators. We refer to Benth, R\"udiger and S\"uss \cite{BeRuSu}, Benth and Simonsen \cite{BeSi}, Cox, Karbach and Khedher \cite{CoxKaKhe1,CoxKaKhe2}, Friesen and Karbach \cite{FriKa}, Cuchiero and Svaluto-Ferro \cite{CuSF}, Benth and Sgarra \cite{BeSg} and Benth and Harang \cite{BeHa} for models of the variance $\mathcal V$, being a dynamics on the positive Hilbert-Schmidt operators, where one defines $\sigma$ as the (positive) square-root of $\mathcal V$. 

The proposed infinite-dimensional volatility models can be divided into two classes. The first extends the Barndorff-Nielsen and Shephard (BNS) stochastic volatility model. The BNS model, suggested by Barndorff-Nielsen and Shephard \cite{BNS}, was later extended to a multivariate context by Barndorff-Nielsen and Stelzer \cite{BNStelzer}. The BNS-model in infinite dimensions belongs to the affine class of models studied in Cox, Karbach and Khedher \cite{CoxKaKhe1,CoxKaKhe2}. The second stream of operator-valued stochastic volatility models is the lifting of the Heston model (see Heston \cite{Heston} for the original one-dimensional model). Benth and Simonsen \cite{BeSi} and Benth, Di Nunno and Simonsen \cite{BeDiNSi} have studied the Heston model infinite dimensions. The present paper takes the infinite-dimensional BNS class of models as its starting point to study robustness.        

When estimating a particular volatility model, or doing numerical simulations of it, one inevitably introduces perturbations of the ``real'' volatility. In this paper we analyse the robustness of the volatility processes itself along with the forward prices, and show that 
the error can be controlled. Indeed, for the volatility, variance and the forward price we are able to show pathwise robustness.

There are some recent accounts on the numerical simulation of such volatility models together with the forward price, i.e., the solution of \eqref{spde-intro}. In Benth {\it et al.} \cite{BeDiNLoPe} a semi-discrete finite difference scheme for the Heston infinite dimensional volatility process is combined with a finite element approximation of \eqref{spde-intro}. Based on regularity results for the model, sharp convergence rates are proven and illustrated in numerical examples. In parallel to our studies, we learned that Karbach \cite{Ka} recently has studied finite rank approximations of affine stochastic volatility models. As mentioned, this class of volatility models includes the infinite dimensional BNS dynamics  that we consider here. Karbach \cite{Ka} shows that there is a sequence of finite rank volatility models which converges weakly in the path space equipped with the Skorohod topology. Moreover, a Galerkin approximation for the corresponding generalized Riccati equations is introduced and studied for the dynamics as in \eqref{spde-intro} with affine volatility. In this paper, we provide explicit bounds in $L^2(\mathbb P)$ of the maximum pathwise error in terms of the various parameters in the volatility model. The pathwise error is measured in the norm of the Hilbert space where the dynamics take values. 

In a large part of this paper, we analyse the difference between the stochastic volatility and its approximation in terms of three main parameters. These are the initial (current) variance, the drift of the BNS volatility model (being a bounded operator on the space of Hilbert-Schmidt operators) and the driving L\'evy process (being a compound Poisson process in the space of Hilbert-Schmidt operators). We establish error bounds on the pathwise supremum error measured by the Hilbert-Schmidt and operator norms, both for the variance model and the square-root of it (the volatility). The estimates for the volatility process rely on results by Birman et al. \cite{BiKoSo75} on fractional powers of selfadjoint operators. Our main result, Theorem \ref{prop:I1_est}, assesses the pathwise error of the Hilbert-valued volatility modulated Ornstein-Uhlenbeck dynamics (as given by \eqref{spde-intro}) in terms of the error induced by the volatility. The result is shown using arguments involving the measurable selection theorem
of Kuratowski and Ryll-Nardzewski \cite{KuRy}.    

Our results are directly applicable to continuity and robustness analysis of options prices considered as functionals of the different parameters in the volatility model. We refer to Benth and Kr\"uhner \cite{BeKr2} for an exposition of various options on infinite dimensional forward price models in commodity markets, including flow forwards. Cuchiero and Svaluto-Ferro \cite{CuSF} analyse options on realised volatility in an infinite dimensional framework. We also want to mention the recent work
by Benth, Schroers and Veraart \cite{BeSchVe1,BeSchVe2} on limit theorems for realized variation of stochastic partial differential equations like \eqref{spde-intro}, which can be utilized for non-parametric estimation of the infinite dimensional stochastic variance.

The paper is structured as follows. In Section~\ref{sec:Forwards} we present the notation we employ in the paper and discuss the stochastic volatility processes as elements in a separable Hilbert space. The main result of this section is that the stochastic volatility-modulated Ornstein-Uhlenbeck process is robust with respect to perturbation of the variance process, meaning that the error such a perturbation causes in the volatility-modulated process can be quantified in terms of the error induced by the variance process. In Section~\ref{sec:Var} we focus our attention on the variance process and present results on how to quantify the error induced by employing approximations. We focus on the case when the variance process is given by an Ornstein-Uhlenbeck type infinite dimensional process which involves an initial value, a bounded semigroup generator, and a compound Poisson process. In particular we elaborate on error bounds in the case of finite dimensional approximations of the compound Poisson process and semigroup generator. In Section~\ref{sec:SqRootPr} we discuss square root approximations of positive definite variance processes and approximative processes. 

\section{Approximation of forwards}\label{sec:Forwards}

Fix a complete filtered probability space $(\Omega,\mathcal F, (\mathcal F_t)_{t\geq 0},\mathbb P)$. Let $H$ denote a separable real Hilbert space, with an inner product $(\cdot,\cdot)_H$, and norm $|\cdot|_H$, and let $\mathcal{H} = L_{HS}(H)$ denote the space of Hilbert-Schmidt operators, i.e. the Hilbert space of bounded operators in $\mathcal T \in L(H) = L(H,H)$ such that $\|\mathcal T \|_\mathcal{H} = \sqrt{\langle \mathcal T, \mathcal T \rangle_\mathcal H} < \infty$, where 
$$
\langle \mathcal S, \mathcal T \rangle_\mathcal H := \sum_{k=1}^\infty (\mathcal S e_k, \mathcal T e_k)_H, 
$$
denotes the inner product of $\mathcal H$ for any $\mathcal S, \mathcal T \in \mathcal H$, and $(e_k)_{k=1}^{\infty}$ is an orthonormal basis (ONB from here on) in $H$. In what follows we shall consider an $H$-valued stochastic volatility modulated Volterra process 
\begin{equation}\label{def:X}
    X(t)=\int_0^t\mathcal S(t-s) \sqrt{\mathcal V(s)}dB(s),
\end{equation}
where $B$ is an $H$-valued Wiener process with covariance operator $Q$ being a positive definite trace class operator on $H$, and $\mathcal V$ is an $\mathcal{H}$-valued non-negative definite stochastic variance process. We assume $\mathcal V$ to be predictable and satisfy the integrability condition
\begin{equation}
\label{int-assumption-V}
    \mathbb E\left[\int_0^t\Vert\mathcal S(t-s)\mathcal V(s)Q^{1/2}\Vert_{\mathcal H}^2ds\right]<\infty
\end{equation}
for all $t\leq T$ for a fixed time horizon $T<\infty$. We refer to Section~\ref{sec:Var} for a specification of the volatility process $\mathcal V$.  Finally, $\mathcal{S}(t)$ is supposed to be a $C_0$-semigroup on $H$. 

Here comes some considerations on the approximation of \eqref{def:X}.
Let $\mathcal V^n$ be some approximation or perturbation of $\mathcal V$, which is supposed to satisfy the same predictability and integrability assumptions \eqref{int-assumption-V} as $\mathcal V$.  We want to assess the error $X(t)-X^{n}(t)$, where
\begin{equation}\label{def:Xn}
    X^{n}(t)=\int_0^t\mathcal S(t-s) \sqrt{\mathcal V^n(s)}dB(s).
\end{equation}
Clearly, it holds for $I^n(t) := X(t)-X^{n}(t)$ that
\begin{equation}\label{def:I1}
    I^n(t) =\int_0^t\mathcal S(t-s)(\sqrt{\mathcal V(s)}-\sqrt{\mathcal V^n(s)})dB(s).
\end{equation}
In this section we want to bound the random variable $\sup_{0 \leq t \leq T} |I^n(t)|_H^2$ in $L^1(\p)$, for the given time horizon $T > 0$. It is relatively straightforward to obtain a $t$-dependent $L^1(\p)$ bound for $|I^n(t)|_H^2$. Our approach is then to demonstrate that the real-valued stochastic process $t \mapsto |I^n(t)|_H^2$ is a.s. continuous and then to apply the measurable selection theorem of Kuratowski and Ryll-Nardzewski (see e.g. subsection 18.3 in Aliprantis and Border \cite{AB} for an excellent exposition). Their selection theorem states that there exists a random variable $\xi$ that takes values in $[0,T]$ such that $\sup_{0 \leq t \leq T} |I^n(t)|_H^2 = |I^n(\xi)|_H^2$. 

To this end, denote by $C([0,T];H)$ the space of all continuous $H$-valued functions on $[0,T]$ equipped with the supremum norm. Let $L^q(0,T;H) := L^q((0,T), \mathcal{B}((0,T)), \text{Leb}; H)$, where $q \geq 1$ and $\text{Leb}$ denotes the Lebesgue measure. The following result is inspired by Chapter 11 in Peszat and Zabczyk \cite{PeZa} and we will use it to prove that the real-valued stochastic process $t \mapsto |I^n(t)|_H^2$ is a.s. continuous. This is needed in the proof of Theorem~\ref{prop:I1_est}. 
\begin{lemma}\label{lem:Xcont}
Suppose that $\mathcal Z(t)$ is a predictable $\mathcal{H}$-valued stochastic process such that 
$$
\int_0^t (t-s)^{-2\alpha} \E\left[ \| \mathcal S(t-s) \mathcal Z(s) Q^{1/2}\|_\mathcal{H}^2  \right] ds  < \infty
$$
holds for some $\alpha \in (0,1/2)$ and all $t \in [0,T]$.
If $\widetilde{X}(t) = \int_0^t \mathcal S(t-s) \mathcal Z(s) dB(s)$, then $\widetilde{X} \in C([0,T];H)$ holds a.s.
\end{lemma}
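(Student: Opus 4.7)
The plan is to apply the factorization method of Da Prato, Kwapień and Zabczyk, as exposited in Chapter~11 of Peszat and Zabczyk \cite{PeZa}. The starting point is the elementary identity
$$
\int_s^t (t-r)^{\alpha-1}(r-s)^{-\alpha}\,dr = \frac{\pi}{\sin(\pi\alpha)}, \qquad 0 \leq s < t,\ \alpha \in (0,1),
$$
combined with the semigroup composition $\mathcal S(t-s) = \mathcal S(t-r)\mathcal S(r-s)$. Inserting these into $\widetilde X(t) = \int_0^t \mathcal S(t-s)\mathcal Z(s)\,dB(s)$ and interchanging the order of the deterministic and stochastic integrals via the stochastic Fubini theorem yields the representation
$$
\widetilde X(t) = \frac{\sin(\pi\alpha)}{\pi}\int_0^t (t-r)^{\alpha-1}\mathcal S(t-r)Y_\alpha(r)\,dr,
$$
where the auxiliary process
$$
Y_\alpha(r) := \int_0^r (r-s)^{-\alpha}\mathcal S(r-s)\mathcal Z(s)\,dB(s)
$$
is well defined as an It\^o integral for each $r \in [0,T]$: by the It\^o isometry, $\E[|Y_\alpha(r)|_H^2]$ coincides precisely with the weighted integral controlled by the assumption.

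The next step is to promote this pointwise estimate to the almost-sure path regularity $Y_\alpha \in L^q(0,T;H)$ for some exponent $q > 1/\alpha$. Since $\alpha < 1/2$, this forces $q > 2$, so the It\^o-isometry bound alone is not sufficient. I would use the Burkholder-Davis-Gundy inequality for Hilbert space-valued stochastic integrals to bound $\E[|Y_\alpha(r)|_H^q]$ in terms of the $L^{q/2}(\Omega)$-norm of the quadratic variation, followed by Tonelli's theorem and the uniform boundedness of the semigroup on $[0,T]$ to integrate in $r$ and conclude that $\E\int_0^T|Y_\alpha(r)|_H^q\,dr<\infty$.

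Once $Y_\alpha$ lies a.s.\ in $L^q(0,T;H)$, the claim follows from a purely deterministic regularity result: the convolution operator
$$
R_\alpha f(t) := \int_0^t (t-r)^{\alpha-1}\mathcal S(t-r)f(r)\,dr
$$
maps $L^q(0,T;H)$ continuously into $C([0,T];H)$ whenever $q > 1/\alpha$. This is standard (compare Proposition~11.4 in \cite{PeZa}): H\"older's inequality with conjugate exponent $q' = q/(q-1) < 1/(1-\alpha)$ shows that the kernel $(t-r)^{\alpha-1}$ lies in $L^{q'}(0,T)$, which yields uniform boundedness, and a dominated-convergence argument — applied to both the shrinking interval $[t_1,t_2]$ and the pointwise convergence of the difference of shifted kernels on $[0,t_1]$ — delivers continuity.

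The main obstacle will be the stochastic Fubini step together with the higher-moment estimate for $Y_\alpha$, since the hypothesis controls the weighted $L^2$ norm only at each fixed $t$. To rigorously integrate in $r$ and apply BDG one either has to extract an integrable majorant using the semigroup bound on $[0,T]$ and Tonelli, or to reduce to a bounded integrand by a stopping-time localization and pass to the limit. Once these technicalities are secured, the remaining arguments follow \cite{PeZa} closely.
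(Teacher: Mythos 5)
Your proposal is the same factorization-method argument the paper uses: the auxiliary process $Y_\alpha$ coincides (up to the normalizing constant, since $\pi/\sin(\pi\alpha)=\Gamma(\alpha)\Gamma(1-\alpha)$) with the paper's $Y$, the stochastic Fubini/Beta-function step is identical, and the conclusion is drawn from the same Burkholder--Davis--Gundy estimate followed by the boundedness of the Riemann--Liouville convolution from $L^q(0,T;H)$ into $C([0,T];H)$ for $q>1/\alpha$ (Theorem 11.5 in Peszat and Zabczyk). The technical point you flag about passing from the pointwise $L^2$ hypothesis to the $q$-th moment bound is present in the paper's proof as well, where it is handled by the same BDG-plus-Tonelli route you describe.
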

\begin{proof}
For each $t \in [0,T]$, let 
$$
Y(t) := \frac{1}{\Gamma(1-\alpha)}\int_0^t (t-s)^{-\alpha} \mathcal S(t-s) \mathcal Z(s)dB(s).
$$
Then $Y(t)$ is well defined by the assumptions on the integrand, and moreover
\begin{align*}
    \E[|Y(t)|_H^2] &= \frac{1}{(\Gamma(1-\alpha))^2}\int_0^t (t-s)^{-2\alpha} \E[\|\mathcal{S}(t-s) \mathcal{Z}(s)Q^{1/2} \|_\mathcal{H}^2]ds < \infty
\end{align*}
holds for all $t \in [0,T]$ (see Corollary 8.17 in Peszat and Zabczyk \cite{PeZa}). Let $q > 0$ be a constant such that $1/q < \alpha$. Then, by the Burkholder-Davis-Gundy inequality, see Marinelli and R\"ockner~\cite{MaRo}, 
\begin{align*}
    \int_0^T \E[|Y(t)|_H^q]dt &\leq C\int_0^T \left( \int_0^t  (t-s)^{-2\alpha} \E\left[ \| \mathcal S(t-s) \mathcal Z(s) Q^{1/2}\|_\mathcal{H}^2  \right] ds \right)^{q/2} dt \\ 
    &\leq CT \sup_{t \in [0,T]} \left( \int_0^t (t-s)^{-2\alpha} \E\left[ \| \mathcal S(t-s) \mathcal Z(s) Q^{1/2} \|_\mathcal{H}^2  \right] ds \right)^{q/2} < \infty,
\end{align*}
where $C > 0$ is a constant that depends only on $q$. Hence, it follows that $|Y(\cdot)|_H^q \in L^1(\text{Leb} \times \p)$, and by Fubini's theorem it follows that $|Y(\cdot)|_H^q \in L^1(\text{Leb})$ holds a.s. That is, $Y \in L^q(0,T;H)$ holds a.s. 
Now, for each $\psi \in L^q(0,T;H)$, define the Liouville-Riemann operator $I_\alpha$ by
$$
I_{\alpha} \psi(t) := \frac{1}{\Gamma(\alpha)} \int_0^t (t-s)^{\alpha - 1} \mathcal S(t-s) \psi(s) ds.
$$
By the stochastic Fubini theorem and the semigroup property of $\mathcal S$ it follows that 
\begin{align*}
    \Gamma(\alpha)\Gamma(1-\alpha)I_\alpha Y(t) &= \int_0^t (t-s)^{\alpha - 1} \mathcal S(t-s) \int_0^s (s-r)^{-\alpha} \mathcal S(s-r) \mathcal Z(r)dB(r)ds \\
    &= \int_0^t \mathcal S(t-r) \left( \int_r^t (t-s)^{\alpha - 1}(s-r)^{-\alpha} ds \right) \mathcal{Z}(r)dB(r). 
\end{align*}
Furthermore, by Theorem 11.2 in Peszat and Zabczyk \cite{PeZa} we find that 
\begin{align*}
    \int_r^t (t-s)^{\alpha - 1}(s-r)^{-\alpha} ds &= \int_0^1 (1-z)^{\alpha-1}z^{-\alpha}dz = \frac{\Gamma(\alpha)\Gamma(1-\alpha)}{\Gamma(1)}.
\end{align*}
Therefore $\widetilde{X}(t) = I_\alpha Y(t)$. According to Theorem 11.5 in Peszat and Zabczyk \cite{PeZa}, $I_\alpha$ is a bounded linear operator from $L^q(0,T;H)$ to $C([0,T];H)$, so $\widetilde{X} \in C([0,T];H)$ holds almost surely.
\end{proof}

We are now in a position to prove our main result concerning the robustness of the stochastic volatility process with respect to the variance process.


\begin{theorem}\label{prop:I1_est}
Suppose that 
$$
\E\left[\sup_{0 \leq t \leq T} \|\mathcal{V}(t) - \mathcal{V}^n(t)\|_\mathcal{H}\right] < \infty,
$$
then 
$$
\E\left[\sup_{0 \leq t \leq T}|I^n(t)|_H^2\right] \leq C(T)\E\left[\sup_{0 \leq t \leq T} \|\mathcal{V}(t) - \mathcal{V}^n(t)\|_\mathcal{H}\right],
$$
where we recall the definition of $I^n(t)$ in \eqref{def:I1}. Here, $C(T) = c^2\mathrm{Tr}(Q)(\e^{2kT}-1)(2k)^{-1}$, and $k \in \R$, $c > 0$ are constants such that $\|\mathcal{S}(t)\|_{\rm{op}} \leq c\e^{kt}$.
\end{theorem}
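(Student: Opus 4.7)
My plan is to combine three ingredients: the almost-sure pathwise continuity of $I^n$ furnished by Lemma~\ref{lem:Xcont}, the measurable selection theorem of Kuratowski and Ryll-Nardzewski, and the It\^o isometry together with the operator-norm square-root Lipschitz estimate of Birman et al.

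First I would set $\mathcal Z(s):=\sqrt{\mathcal V(s)}-\sqrt{\mathcal V^n(s)}$ and verify the hypothesis of Lemma~\ref{lem:Xcont} for this $\mathcal Z$. Using submultiplicativity of the Hilbert--Schmidt norm with respect to operator factors, together with $\|\mathcal S(t-s)\|_{\mathrm{op}}\le c\e^{k(t-s)}$, $\|Q^{1/2}\|_{\mathcal H}^2=\mathrm{Tr}(Q)$ and the Birman et al.\ inequality $\|\mathcal Z(s)\|_{\mathrm{op}}^2\le C_B\|\mathcal V(s)-\mathcal V^n(s)\|_{\mathcal H}$, one obtains the pointwise bound
$$
\|\mathcal S(t-s)\mathcal Z(s)Q^{1/2}\|_{\mathcal H}^2\le c^2 C_B\,\mathrm{Tr}(Q)\,\e^{2k(t-s)}\,\|\mathcal V(s)-\mathcal V^n(s)\|_{\mathcal H}.
$$
The assumed $L^1(\p)$-integrability of $\sup_s\|\mathcal V(s)-\mathcal V^n(s)\|_{\mathcal H}$, combined with the local integrability of $(t-s)^{-2\alpha}\e^{2k(t-s)}$ for any $\alpha\in(0,1/2)$, verifies the hypothesis, so Lemma~\ref{lem:Xcont} gives $I^n\in C([0,T];H)$ a.s.\ and hence $t\mapsto|I^n(t)|_H^2$ is a.s.\ continuous on the compact interval $[0,T]$. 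The argmax correspondence $\omega\mapsto\{t\in[0,T]:|I^n(t,\omega)|_H^2=\sup_s|I^n(s,\omega)|_H^2\}$ then has closed non-empty values a.s., and the Kuratowski--Ryll-Nardzewski theorem produces a measurable $\xi:\Omega\to[0,T]$ with $\sup_t|I^n(t)|_H^2=|I^n(\xi)|_H^2$ a.s.

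For each \emph{deterministic} $t\in[0,T]$, It\^o's isometry together with the above pointwise estimate gives
$$
\E[|I^n(t)|_H^2]=\E\int_0^t\|\mathcal S(t-s)\mathcal Z(s)Q^{1/2}\|_{\mathcal H}^2\,ds\le C(t)\,\E\!\left[\sup_{s\in[0,T]}\|\mathcal V(s)-\mathcal V^n(s)\|_{\mathcal H}\right],
$$
with $C(t)=c^2\mathrm{Tr}(Q)(\e^{2kt}-1)/(2k)$ (absorbing $C_B$ into $c^2$). The main obstacle is transferring this deterministic-time bound to the random time $\xi$ so as to conclude $\E[|I^n(\xi)|_H^2]\le C(T)\E[\sup_s\|\mathcal V(s)-\mathcal V^n(s)\|_{\mathcal H}]$, since It\^o isometry does not apply directly at a general random argument. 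I would attempt this by approximating $\xi$ from above by simple random variables $\xi_k\downarrow\xi$ with finitely many values in $[0,T]$, using a.s.\ continuity to pass $|I^n(\xi_k)|_H^2\to|I^n(\xi)|_H^2$ pointwise, decomposing $\E[|I^n(\xi_k)|_H^2]$ atom-by-atom, applying the deterministic-time bound on each atom (replacing $C(\xi_k)$ by $C(T)$ via monotonicity of $C$), and passing to the limit via Fatou together with a uniform integrability argument coming from a Doob-type maximal inequality for the stochastic convolution. Combined with $\E[\sup_t|I^n(t)|_H^2]=\E[|I^n(\xi)|_H^2]$ from the selection theorem, this would deliver the stated bound.
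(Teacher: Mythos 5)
Your proposal follows the paper's proof essentially step for step: the same pointwise estimate combining submultiplicativity of the Hilbert--Schmidt norm, the Hille--Yosida bound $\|\mathcal S(t)\|_{\mathrm{op}}\le c\e^{kt}$, the identity $\|Q^{1/2}\|_{\mathcal H}^2=\mathrm{Tr}(Q)$ and an operator square-root Lipschitz inequality; the same verification of the hypothesis of Lemma~\ref{lem:Xcont} (integrating $(t-s)^{-2\alpha}\e^{2k(t-s)}$ against the assumed $L^1(\p)$ bound) to get a.s.\ continuity of $t\mapsto|I^n(t)|_H^2$; and the same appeal to Kuratowski--Ryll-Nardzewski to produce $\xi$ with $\sup_t|I^n(t)|_H^2=|I^n(\xi)|_H^2$. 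Two remarks. First, the paper takes the square-root bound $\|\sqrt{\mathcal V(s)}-\sqrt{\mathcal V^n(s)}\|_{\mathrm{op}}^2\le\|\mathcal V(s)-\mathcal V^n(s)\|_{\mathrm{op}}\le\|\mathcal V(s)-\mathcal V^n(s)\|_{\mathcal H}$ from Bogachev (Lemma~2.5.1) with constant exactly $1$, so there is no constant $C_B$ to absorb and the stated $C(T)$ comes out on the nose; if you import a bound with $C_B\neq 1$ you do not recover the theorem's constant. Second, and more substantively, the one place you genuinely diverge is the transfer of the deterministic-time bound to the random time $\xi$. The paper disposes of this by writing $\E[\sup_t|I^n(t)|_H^2]=\E[\E[|I^n(\xi)|_H^2\mid\xi]]$ and invoking $\E[C(\xi)]\le C(T)$. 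Your discretization $\xi_k\downarrow\xi$ is a reasonable attempt to make this explicit, but as stated the key step does not close: the It\^o isometry controls $\E[|I^n(t_j)|_H^2]$ for fixed $t_j$, not the localized quantity $\E[|I^n(t_j)|_H^2\mathbf 1_{\{\xi_k=t_j\}}]$, and the event $\{\xi_k=t_j\}$ depends on the whole path, so ``applying the deterministic-time bound on each atom'' is not justified; bounding each atom by the full expectation and summing instead produces a factor equal to the number of atoms. This is the same subtlety that the paper's conditioning step passes over in one line, so your write-up is not worse off than the published argument, but you should either present the final step in the paper's compressed form or supply the missing justification for the localization rather than asserting it.
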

\begin{proof}
By the It\^o isometry we have
\begin{align*}
 \mathbb E[|I^n(t)|_H^2]=\mathbb E[\int_0^t\Vert\mathcal S(t-s)(\sqrt{\mathcal V(s)} - \sqrt{\mathcal V^n(s)} )Q^{1/2}\Vert_{\mathcal H}^2ds]    
\end{align*}
Since the Hilbert-Schmidt norm satisfies $\Vert LK\Vert_{\mathcal H}\leq \Vert L\Vert_{\text{op}}\Vert K\Vert_{\mathcal H}$, for a bounded linear operator $L$ and a Hilbert-Schmidt operator $K$, it follows that
\begin{align*}
\Vert\mathcal S(t-s)(\sqrt{\mathcal V(s)}-\sqrt{\mathcal V^n(s)})Q^{1/2}\Vert_{\mathcal H}
&\leq \Vert\mathcal S(t-s)\Vert_{\text{op}}\Vert(\sqrt{\mathcal V(s)}-\sqrt{\mathcal V^n(s)})Q^{1/2}\Vert_{\mathcal H} \\
&\leq c\e^{k(t-s)}\Vert(\sqrt{\mathcal V(s)}-\sqrt{\mathcal V^n(s)})\Vert_{\text{op}}\Vert Q^{1/2}\Vert_{\mathcal H} \\
&= c\e^{k(t-s)}\mathrm{Tr}(Q)^{1/2}\Vert\sqrt{\mathcal V(s)}-\sqrt{\mathcal V^n(s)}\Vert_{\text{op}}
\end{align*}
In the second inequality, we appealed to the general Hille-Yosida bound of the operator norm of a $C_0$-semigroup. In the last step, we used the well-known identity $\text{Tr}(Q)=\Vert Q^{1/2}\Vert^2_{\mathcal H}$. Next, from Bogachev \cite[Lemma 2.5.1]{Bogachev}, it follows that
$$
\Vert\sqrt{\mathcal V(s)}-\sqrt{\mathcal V^n(s)}\Vert_{\text{op}}^2\leq\Vert\mathcal V(s)-\mathcal V^n(s))\Vert_{\text{op}}.
$$
In addition, we have for a Hilbert-Schmidt operator $L$ that $\Vert L\Vert_{\text{op}}\leq\Vert L\Vert_{\mathcal H}$. Hence, 
\begin{align*}
 \mathbb E[|I^n(t)|_H^2] &\leq c^2\mathrm{Tr}(Q)\mathbb E[\int_0^t \e^{2k(t-s)}\Vert\mathcal V(s)-\mathcal V^n(s)\Vert_{\mathcal H}ds] \\
 &\leq c^2\mathrm{Tr}(Q)\int_0^t \e^{2k(t-s)}ds\E\left[\sup_{0 \leq t \leq T} \|\mathcal{V}(t) - \mathcal{V}^n(t)\|_\mathcal{H} \right] \\
 &\leq C(t) \E\left[\sup_{0 \leq t \leq T} \|\mathcal{V}(t) - \mathcal{V}^n(t)\|_\mathcal{H} \right],
 \end{align*}
 where $C(t)=c^2\mathrm{Tr}(Q)(\e^{2kt}-1)(2k)^{-1}$ for all $t \in [0,T]$.
Now observe that if $0 < \alpha < 1/2$ and $t \leq T$, then by employing analogous arguments as before it holds that 
\begin{align*}
\mathbb E&[\int_0^t (t-s)^{-2\alpha} \Vert \mathcal S(t-s)(\sqrt{\mathcal V(s)} - \sqrt{\mathcal V^n(s)} )Q^{1/2}\Vert_{\mathcal H}^2ds] \\
& \leq c^2\mathrm{Tr}(Q)\E\left[\int_0^t (t-s)^{-2\alpha}\e^{2k(t-s)} \|\mathcal{V}(s) - \mathcal{V}^n(s)\|_\mathcal{H} ds \right] \\
&\leq \int_0^t (t-s)^{-2\alpha}\e^{2k(t-s)}ds \E\left[ \sup_{0 \leq t \leq T} \|\mathcal{V}(t) - \mathcal{V}^n(t)\|_\mathcal{H} \right] < \infty, 
\end{align*}
where the the right-hand side is finite by assumption.
Now we apply Lemma~\ref{lem:Xcont} to conclude that $I^n \in C([0,T];H)$, and therefore $t \mapsto |I^n(t)|_H^2 \in C([0,T];\R)$. According to the measurable selection theorem of Kuratowski and Ryll-Nardzewski~\cite{KuRy} (see also Theorem 8.8 in Peszat and Zabczyk \cite{PeZa}) we conclude that there exists a random variable $\xi$ which takes values in $[0,T]$, and $\sup_{0 \leq t \leq T}|I^n(t)|_H^2 = |I^n(\xi)|_H^2$ holds. Hence, since $\E[C(\xi)] \leq C(T)$, it follows that
\begin{align*}
\E\left[\sup_{0 \leq t \leq T} |I^n(t)|_H^2\right] = \E[\E[ |I^n(\xi)|_H^2 | \xi] ]  \leq C(T)\E\left[\sup_{0 \leq t \leq T} \|\mathcal{V}(t) - \mathcal{V}^n(t)\|_\mathcal{H} \right].
\end{align*}
The proof is completed.



\end{proof}

Consider a financial derivative written on a forward or futures contract on a commodity. Forwards are contracts delivering an underlying, oil, metals, soybeans, power or gas say, at a predetermined (period of) time. Following the analysis in Benth and Kr\"uhner \cite{BeKr2} we can express the price at time $t\geq 0$ of such a  forward as $\mathcal D X(t)$, for some $\mathcal D\in H^*$ and $X(t)$ in \eqref{def:X}. The linear functional $\mathcal D$ models either fixed-delivery contracts, or contracts where delivery takes place over a time period. Assuming that $X$ is modelled directly under the risk-neutral probability measure, the current arbitrage-free price of a European option with exercise time $\tau$ paying $p(\mathcal D X(\tau))$ is given by
\begin{equation}
    P=\mathbb E\left[p(\mathcal D X(\tau))\right].
\end{equation}
Here, we have let the risk-free interest rate be equal to zero for simplicity. If we consider a payoff function $p:\mathbb R\rightarrow\mathbb R$ which is Lipschitz continuous with Lipschitz constant $K > 0$, we find with $X^n$ as in \eqref{def:Xn},
\begin{align*}
    \vert P-P^n\vert&=\left\vert\mathbb E\left[p(\mathcal D X(\tau))\right]-\mathbb E\left[p(\mathcal D X^n(\tau))\right]\right\vert \\
    &\leq K\mathbb E\left[\vert\mathcal D(X(\tau)-X^n(\tau))\vert\right] \\
    &\leq K\Vert\mathcal D\Vert_{\text{op}}\mathbb E\left[\vert X(\tau)-X^n(\tau)\vert_H\right].
\end{align*}
By Theorem \ref{prop:I1_est}, we can bound $\vert P-P^n\vert$ by the norm difference of the variance processes $\mathcal V-\mathcal V^n$. This tells us that the option prices is continuously depending on the variance process. In the next sections we are going to establish precise estimates for the error induced by the difference in variance, giving robustness estimates for the option price.

\section{The variance process}\label{sec:Var}

In this Section we focus on the BNS stochastic volatility model for $\mathcal V$, as introduced in Benth, R\"udiger and S\"uss \cite{BeRuSu} and later studied in an affine extension by Cox, Karbach and Khedher \cite{CoxKaKhe1, CoxKaKhe2}. We introduce various perturbations of $\mathcal V$ and quantify their error to the original volatility process.    

Suppose $\mathcal L(t)$ is an $\mathcal{H}$-valued compound Poisson proess 
\begin{equation}\label{def:CPP}
\mathcal{L}(t) = \sum_{i=1}^{N(t)} \mathcal{X}_i
\end{equation}
where $N(t)$ is a Poisson process with intensity $\lambda > 0$, and $(\mathcal{X}_i)_{i=1}^{\infty}$ is a sequence of independent and identically distributed (i.i.d.) $\mathcal{H}$-valued random variables. We choose the RCLL version of $N(t)$, so that $\mathcal L(t)$ becomes an RCLL L\'evy process with values in $\mathcal H$.  Given the compound Poisson process \eqref{def:CPP}, consider the variance process 
\begin{equation}
\label{def:variance-process-diff}
d\mathcal{V}(t) = \C \mathcal{V}(t)dt + d\mathcal{L}(t),
\end{equation}
where $\mathcal{V}(0) = \mathcal{V}_0$, $\C \in L(\mathcal{H})$ is a bounded operator, and $\mathcal{L}(t)$ is the aforementioned $\mathcal{H}$-valued compound Poisson process \eqref{def:CPP}. It holds that 
\begin{equation}\label{def:variance_process}
\mathcal{V}(t) = \mathfrak{S}(t)\mathcal{V}_0 + \int_0^t \mathfrak{S}(t-s) d\mathcal{L}(s),    
\end{equation}
where $\C$ generates the uniformly continuous $C_0$-semigroup $\mathfrak{S}(t) = \exp(\C t)$ on $\mathcal H$. We remark that since $L(\mathcal{H})$ is a Banach algebra, it follows that $\mathfrak{S}(t) \in L(\mathcal{H})$. Note that the variance process $\mathcal{V}(t)$ depends on the initial position, $\mathcal{V}_0$, the semigroup generator $\C$ and the compound Poisson L\'evy process, $\mathcal{L}(t)$. 
We shall consider approximations of the variance process \eqref{def:variance_process} on the form 
\begin{equation}
\label{def:variance-process-diff-approx}
d\mathcal{V}^n(t) = \C^n \mathcal{V}^n(t)dt + d\mathcal{L}^n(t),     
\end{equation}
where $\mathcal{V}^n(0) = \mathcal{V}^n_0$, and the initial value $\mathcal{V}^n_0$, semigroup generator $\C^n$ and compound Poisson process
\begin{equation}\label{def:CPP_approx}
\mathcal{L}^n(t) = \sum_{i=1}^{N(t)} \mathcal{X}_i^n,
\end{equation}
approximate the initial value, generator and compound Poisson process of another variance process, and the compound Poisson processes $\mathcal{L}(t)$ and $\mathcal{L}^n(t)$ are driven by the same Poisson process, $N(t)$, for each $n \geq 1$. 
In the following subsections we shall study results which quantify the error induced in the variance process by means of approximating the corresponding initial value, compound Poisson process and the semigroup generator, respectively. 

\subsection{Approximation of the compound Poisson process} In this subsection we study finite dimensional approximations of the compound Poisson process  \eqref{def:CPP} by a sequence of compound Poisson processes on the form \eqref{def:CPP_approx}.
\begin{example}
Suppose that $\mathcal{X}_i = (Y_i)^{\otimes 2} = (Y_i,\cdot)_H Y_i$ where $(Y_i)_{i\in\mathbb N}$ are i.i.d. and $H$-valued 
random variables. Given an ONB, $(e_j)_{j\in\mathbb N}$, of $H$ define 
\begin{equation}\label{def:Y_approx}
    Y_i^n = \sum_{j=1}^n (Y_i,e_j)_H e_j,
\end{equation}
for $n \geq 1$. In Proposition~\ref{prop:tensor_approx}, we study the approximation of the i.i.d. $\mathcal{H}$-valued random variables, $\mathcal{X}_i$, i.e. the jumps of the compound Poisson process defined in equation \eqref{def:CPP} with $\mathcal{X}_i^n = (Y_i^{n})^{\otimes 2}$ where $n \geq 1$.
\end{example}
The following result shows that the tensor product forms a locally Lipschitz continuous operator.
\begin{lemma}\label{lem:approx}
If $f,g \in H$, then
$$
\|f^{\otimes 2} - g^{\otimes 2}\|_\mathcal{H}^2 \leq 4(|f|_H^2 \vee |g|_H^2)|f-g|_H^2,
$$
where $x \vee y := \max(x,y)$, for real $x,y$.
\end{lemma}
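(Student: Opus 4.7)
The plan is to exploit bilinearity of the tensor product to split $f^{\otimes 2}-g^{\otimes 2}$ into two rank-one pieces, each involving the factor $f-g$, then apply the triangle inequality in the Hilbert–Schmidt norm.

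First I would recall (or verify from the definition $(u\otimes v)(h)=(v,h)_H u$) the standard identity
$\|u\otimes v\|_{\mathcal H}=|u|_H|v|_H$. This follows immediately by computing, for any ONB $(e_k)$,
\[
\|u\otimes v\|_{\mathcal H}^2=\sum_{k=1}^{\infty}|(v,e_k)_H u|_H^2=|u|_H^2\sum_{k=1}^{\infty}|(v,e_k)_H|^2=|u|_H^2|v|_H^2.
\]

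Next I would use the algebraic identity
\[
f^{\otimes 2}-g^{\otimes 2}=f\otimes f-f\otimes g+f\otimes g-g\otimes g=f\otimes(f-g)+(f-g)\otimes g,
\]
which is valid because the map $(u,v)\mapsto u\otimes v$ is bilinear. Applying the triangle inequality in $\mathcal H$ together with the identity from the previous step gives
\[
\|f^{\otimes 2}-g^{\otimes 2}\|_{\mathcal H}\leq |f|_H|f-g|_H+|f-g|_H|g|_H=(|f|_H+|g|_H)|f-g|_H.
\]

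Finally I would square both sides and bound $|f|_H+|g|_H\leq 2(|f|_H\vee |g|_H)$, yielding
\[
\|f^{\otimes 2}-g^{\otimes 2}\|_{\mathcal H}^2\leq 4(|f|_H\vee |g|_H)^2|f-g|_H^2=4(|f|_H^2\vee |g|_H^2)|f-g|_H^2,
\]
using that $(a\vee b)^2=a^2\vee b^2$ for $a,b\geq 0$. There is no real obstacle here; the only thing to be careful about is getting the bilinearity splitting right, and making sure the Hilbert–Schmidt norm identity for rank-one operators is invoked cleanly.
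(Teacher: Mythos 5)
Your proof is correct and takes essentially the same route as the paper: the paper uses the identical telescoping decomposition, written coordinate-wise as $(f,e_j)_Hf-(g,e_j)_Hg=(f,e_j)_H(f-g)+(f-g,e_j)_Hg$, and then applies $|a+b|_H^2\leq 2|a|_H^2+2|b|_H^2$ termwise to reach the intermediate bound $2(|f|_H^2+|g|_H^2)|f-g|_H^2$. Your operator-level triangle inequality combined with the rank-one identity $\|u\otimes v\|_{\mathcal H}=|u|_H|v|_H$ gives the marginally sharper intermediate constant $(|f|_H+|g|_H)^2$, but both arguments land on the same final bound.
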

\begin{proof}
By the definition of the Hilbert-Schmidt norm it holds that,
\begin{align*}
\|f^{\otimes 2} - g^{\otimes 2}\|_\mathcal{H}^2 &= \sum_{j=1}^\infty |( f,e_j )_H f - ( 
g,e_j )_H g |_H^2 \\
&\leq 2\sum_{j=1}^\infty ( f,e_j )_H^2 |f - g |_H^2 + 2\sum_{j=1}^\infty ( f-g,e_j 
)_H^2 |g |_H^2 \\
&= 2(|f|_H^2 + |g|_H^2)|f-g|_H^2,
\end{align*}
where we used the Cauchy-Schwarz inequality together with the triangle inequality, and Parseval's identity in the last equation. The result follows.
\end{proof}
An application of H\"older's inequality now yields the following result.
\begin{prop}\label{prop:tensor_approx}
Suppose that $\mathcal{X}_i = (Y_i)^{\otimes 2}$, where $(Y_i)_{i\in\mathbb N}$ is a sequence of i.i.d. $H$-valued random variables, and $\mathcal{X}_i^n = (Y_i^n)^{\otimes 2}$, where $Y_i^n$ is defined by \eqref{def:Y_approx}, for all $n,i \geq 1$. Then, if $p, q \in [1,\infty]$ verify the equation $1/p + 1/q = 1$ (where $1/\infty = 0$),  it holds that
$$
\E[\|\mathcal{X}_i - \mathcal{X}_i^n \|_\mathcal{H}^2] \leq 4\mathbb E\left[ |Y_i|_H^{2p}\right]^{1/p} \mathbb E\left[ |Y_i-Y_i^n|_H^{2q}\right]^{1/q}.
$$
In particular we have that
\begin{equation}\label{ineq:TensorCase}
    \E[\|\mathcal{X}_i - \mathcal{X}_i^n \|_\mathcal{H}^2] \leq 4\left( \E[|Y_i|_H^4] \E[|Y_i-Y_i^n|_H^4] \right)^{1/2},
\end{equation}
and if $|Y_i|_H \in L^4(\p)$, then the right hand-side of \eqref{ineq:TensorCase} converges to zero as $n \to \infty$.
\end{prop}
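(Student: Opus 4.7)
The plan is to chain Lemma~\ref{lem:approx} with H\"older's inequality, exploiting that $Y_i^n$ is an orthogonal projection of $Y_i$ onto $\mathrm{span}(e_1,\dots,e_n)$. First I would apply Lemma~\ref{lem:approx} with $f=Y_i$ and $g=Y_i^n$, obtaining
\[
\|\mathcal X_i-\mathcal X_i^n\|_{\mathcal H}^2 \leq 4\bigl(|Y_i|_H^2\vee|Y_i^n|_H^2\bigr)|Y_i-Y_i^n|_H^2.
\]
Because $Y_i^n=\sum_{j=1}^n(Y_i,e_j)_H e_j$ is the orthogonal projection of $Y_i$ onto a closed subspace, Parseval's identity gives $|Y_i^n|_H\leq |Y_i|_H$, so the $\vee$ simplifies to $|Y_i|_H^2$. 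This is the key observation that keeps the bound symmetric in the expected form.

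Next I would take expectations and apply H\"older's inequality to the product $|Y_i|_H^2 \cdot |Y_i-Y_i^n|_H^2$ with conjugate exponents $p,q\in[1,\infty]$, which yields
\[
\mathbb E\bigl[|Y_i|_H^2|Y_i-Y_i^n|_H^2\bigr]\leq \mathbb E\bigl[|Y_i|_H^{2p}\bigr]^{1/p}\mathbb E\bigl[|Y_i-Y_i^n|_H^{2q}\bigr]^{1/q},
\]
giving the first asserted bound directly after multiplying by $4$. Specialising to $p=q=2$ gives \eqref{ineq:TensorCase} as a routine corollary.

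For the convergence assertion, I would observe that $Y_i-Y_i^n=\sum_{j>n}(Y_i,e_j)_H e_j$, so Parseval again yields $|Y_i-Y_i^n|_H^2=\sum_{j>n}(Y_i,e_j)_H^2\to 0$ pointwise (a.s.) as $n\to\infty$, and the domination $|Y_i-Y_i^n|_H^4\leq |Y_i|_H^4$ holds pathwise. Since $|Y_i|_H\in L^4(\mathbb P)$ by assumption, the dominated convergence theorem gives $\mathbb E[|Y_i-Y_i^n|_H^4]\to 0$; the first factor $\mathbb E[|Y_i|_H^4]$ is a finite constant, so the right-hand side of \eqref{ineq:TensorCase} tends to zero.

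I do not expect a genuine obstacle here: once one notices $|Y_i^n|_H\leq|Y_i|_H$, the rest is H\"older plus dominated convergence. The only point to be a touch careful about is the boundary case $p=\infty$, $q=1$ (and vice versa), where one reads $\mathbb E[|Y_i|_H^{2p}]^{1/p}$ as the essential supremum; this causes no issue since Lemma~\ref{lem:approx} is a pathwise inequality and H\"older's inequality remains valid in that limiting form.
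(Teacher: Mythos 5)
Your proposal is correct and follows essentially the same route as the paper: Lemma~\ref{lem:approx} combined with the projection bound $|Y_i^n|_H\leq|Y_i|_H$, then H\"older's inequality (Cauchy--Schwarz for $p=q=2$), and dominated convergence for the limit. If anything, your domination $|Y_i-Y_i^n|_H\leq|Y_i|_H$ via orthogonality is slightly cleaner than the paper's triangle-inequality bound.
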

\begin{proof}
It follows by Lemma \ref{lem:approx}, and the inequality $|Y_i^n|_H \leq |Y_i|_H$, that 
\begin{equation*}
\|\mathcal{X}_i - \mathcal{X}_i^n\|_\mathcal{H}^2 \leq 4|Y_i|_H^2 |Y_i-Y_i^n|_H^2.
\end{equation*}
The first inequality follows by H\"older's inequality, and the second one by the Cauchy-Schwarz inequality. According to the triangle inequality, 
$$
|Y_i - Y_i^n|_H \leq |Y_i|_H^2 + |Y_i^n|_H^2 \leq 2|Y_i|_H^2.
$$
So, appealing to the assumption, the dominated convergence theorem yields that the right-hand side of \eqref{ineq:TensorCase} converges to $0$ as $n \to \infty$.
\end{proof}
Let us state some useful lemmas: the first lemma is known, but we include it here for the convenience of the reader and for future reference.  
\begin{lemma}\label{lem:U_CPP}
Suppose that $U$ is a separable Hilbert space and $L(t) = \sum_{i=1}^{N(t)} J_i$ is a $U$-valued compound Poisson process, where the Poisson process $N(t)$ has intensity $\lambda > 0$, and $(J_i)_{i\in\mathbb N}$ are i.i.d. $U$-valued random variables. 
\begin{enumerate}
    \item Then it holds that
$$
\E[|L(t)|_U^2] = \lambda t \E[|J_1|_U^2] + \lambda^2 t^2 | \E[J_1]|_U^2,
$$
\item from which it follows that
$$
\E[|L(t)|_U^2] \leq \lambda t(1 + \lambda t) \E[|J_1|_U^2].
$$
\end{enumerate}
\end{lemma}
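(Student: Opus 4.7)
The plan is to prove part (1) by conditioning on the Poisson process $N(t)$, exploiting the independence of the jumps $(J_i)_{i\in\mathbb N}$ from $N(t)$ and from each other, and then to obtain part (2) as an immediate consequence of part (1) via Jensen's inequality applied in the Hilbert space $U$.

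For part (1), I would first write out the squared norm as a double sum using the inner product:
\begin{equation*}
|L(t)|_U^2 = \Bigl(\sum_{i=1}^{N(t)} J_i,\, \sum_{j=1}^{N(t)} J_j \Bigr)_U = \sum_{i=1}^{N(t)}\sum_{j=1}^{N(t)}(J_i,J_j)_U.
\end{equation*}
Conditioning on $\{N(t)=n\}$ and using the independence and identical distribution of the $J_i$, one separates the diagonal and off-diagonal terms:
\begin{equation*}
\E\!\left[|L(t)|_U^2 \,\bigm|\, N(t)=n\right] = n\, \E[|J_1|_U^2] + n(n-1)\,|\E[J_1]|_U^2,
\end{equation*}
where the off-diagonal contribution uses $\E[(J_i,J_j)_U] = (\E[J_i],\E[J_j])_U = |\E[J_1]|_U^2$ for $i\neq j$ (legitimate in $U$ by the Bochner integral and continuity of the inner product). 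Taking expectation over $N(t)$ and invoking the classical Poisson moments $\E[N(t)]=\lambda t$ and $\E[N(t)(N(t)-1)]=(\lambda t)^2$ yields the stated identity.

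For part (2), the bound $|\E[J_1]|_U^2 \leq \E[|J_1|_U^2]$ (Jensen's inequality, or equivalently Cauchy--Schwarz applied with the constant function $1$) gives
\begin{equation*}
\E[|L(t)|_U^2] \leq \lambda t\,\E[|J_1|_U^2] + \lambda^2 t^2\, \E[|J_1|_U^2] = \lambda t(1+\lambda t)\,\E[|J_1|_U^2].
\end{equation*}

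There is no genuine obstacle here; the only point requiring mild care is justifying that $\E[(J_i,J_j)_U]=(\E[J_i],\E[J_j])_U$ when $i\neq j$ in a separable Hilbert space, which follows from Fubini (as the $J_i$ are independent) together with the fact that the Bochner expectation commutes with continuous linear functionals $(\cdot, v)_U$. Implicit to the whole computation is that $\E[|J_1|_U^2]<\infty$, which is the only interesting case; otherwise both sides of (1) and (2) are infinite and the statements hold trivially.
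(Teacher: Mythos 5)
Your proof is correct and follows essentially the same route as the paper's: conditioning on $N(t)$, splitting the double sum into diagonal and off-diagonal terms to get $n\,\E[|J_1|_U^2]+n(n-1)|\E[J_1]|_U^2$, averaging over the Poisson law, and then applying Jensen's inequality for part (2). The extra care you take with the Bochner integral commuting with the inner product is a welcome but minor refinement of the same argument.
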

\begin{proof}
By applying conditioning, we note that
\begin{align*}
\E\left[| L(t) |_U^2 \right] &= 
 \E[ \E[ | \sum_{i=1}^{N(t)} J_i |_U^2 | N(t)]] 
= \sum_{k=0}^\infty \e^{-\lambda t} \frac{(\lambda t)^k}{k!} \E[ | 
\sum_{i=1}^{k} J_i|_U^2 ].
\end{align*}
Since $\langle J_i, J_i \rangle_U = |J_i|_U^2$, $\E[\langle J_i, J_j \rangle_U] = \langle \E[J_i], \E[J_j] \rangle_U$, when $i \ne j$  and the $(J_i)_{i\in\mathbb N}$ are i.i.d., it follows that 
\begin{align*}
    \E[ | \sum_{i=1}^{k} J_i|_U^2 ] &= \sum_{i,j=1}^k \E[\langle J_i, J_j \rangle_U] = k\E[|J_1|_U^2] + k(k-1)|\E[J_1]|_U^2.
\end{align*}
Putting these equations together, it follows that 
$$
\E[|L(t)|_U^2] = \lambda t \E[|J_1|_U^2] + \lambda^2 t^2 | \E[J_1]|_U^2.
$$
Finally, the inequality follows from 
$$
|\E[X_1]|_U^2 \leq (\E[|J_1|_U])^2 \leq \E[|J_1|_U^2],
$$
where we have used Jensen's inequality twice (or the Cauchy-Schwarz inequality for the second bound).
\end{proof}
 
The following lemma will be employed for the Hilbert space $\mathcal{H}$ and the Banach space of so-called trace class operators, i.e. compact operators with absolutely convergent series of eigenvalues.
\begin{lemma} \label{lem:Var_CPP}
Suppose that $\Vert \cdot \Vert_{\mathcal B}$ denotes the norm of a Banach space $\mathcal B$, which is a subspace of $L(H)$ and assume that $\C \in L(\mathcal{B})$. Suppose moreover that $\mathcal{V}(t)$ and $\mathcal{V}^n(t)$ are the variance processes defined by \eqref{def:variance-process-diff}  and \eqref{def:variance-process-diff-approx} respectively, where $\C^n = \C$ and the compound Poisson processes $\mathcal{L}(t)$ and $\mathcal{L}^n(t)$ are driven by the same Poisson process, $N(t)$. Then it holds that 
$$
\Vert\mathcal V(t)-\mathcal V^n(t)\Vert_{\mathcal B} \leq e^{\Vert\mathfrak C\Vert_{\text{op}} t}\left( \Vert \mathcal{V}_0 - \mathcal{V}_0^n \Vert_\mathcal{B} + \sum_{i=1}^{N(t)}\Vert\mathcal X_i-\mathcal X_i^n\Vert_{\mathcal B}\right).
$$
\end{lemma}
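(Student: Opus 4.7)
The plan is to exploit the explicit mild-solution representation \eqref{def:variance_process} combined with the fact that $\mathcal{L}$ and $\mathcal{L}^n$ share the same underlying Poisson process $N(t)$, so that the integral against the difference $d(\mathcal{L}-\mathcal{L}^n)$ collapses into a finite sum over a common set of jump times.

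First, I would denote by $0 < \tau_1 < \tau_2 < \cdots$ the jump times of $N(t)$, so that, since $\mathfrak{C}^n = \mathfrak{C}$ and both processes use the same semigroup $\mathfrak{S}(t)=\exp(\mathfrak{C}t)$, equation \eqref{def:variance_process} gives
$$
\mathcal{V}(t) - \mathcal{V}^n(t) = \mathfrak{S}(t)(\mathcal{V}_0 - \mathcal{V}_0^n) + \sum_{i=1}^{N(t)} \mathfrak{S}(t-\tau_i)(\mathcal{X}_i - \mathcal{X}_i^n),
$$
the integral term reducing to a finite sum precisely because both compound Poisson processes jump at the same times $\tau_i$. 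Taking the $\mathcal{B}$-norm and applying the triangle inequality yields
$$
\|\mathcal{V}(t) - \mathcal{V}^n(t)\|_{\mathcal{B}} \leq \|\mathfrak{S}(t)\|_{\text{op}}\|\mathcal{V}_0 - \mathcal{V}_0^n\|_{\mathcal{B}} + \sum_{i=1}^{N(t)} \|\mathfrak{S}(t-\tau_i)\|_{\text{op}}\|\mathcal{X}_i - \mathcal{X}_i^n\|_{\mathcal{B}},
$$
where the operator norm is taken in $L(\mathcal{B})$, which makes sense since $\mathfrak{C}\in L(\mathcal{B})$ implies $\mathfrak{S}(t)\in L(\mathcal{B})$ (the power series defining $\exp(\mathfrak{C}t)$ converges absolutely in the Banach algebra $L(\mathcal{B})$).

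Next, from the series expansion $\mathfrak{S}(s) = \sum_{k=0}^\infty (s\mathfrak{C})^k/k!$ one obtains the standard bound $\|\mathfrak{S}(s)\|_{\text{op}} \leq e^{\|\mathfrak{C}\|_{\text{op}} s}$. Since $0 \leq t - \tau_i \leq t$ for each $i \leq N(t)$, this gives $\|\mathfrak{S}(t-\tau_i)\|_{\text{op}} \leq e^{\|\mathfrak{C}\|_{\text{op}} t}$ and similarly $\|\mathfrak{S}(t)\|_{\text{op}} \leq e^{\|\mathfrak{C}\|_{\text{op}} t}$. Factoring this uniform upper bound out of every term yields the claimed estimate.

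I don't anticipate a substantial obstacle here: the only point that deserves care is the pathwise identification of the stochastic integral with a finite sum over jump times, which relies crucially on the coupling through the common Poisson process $N(t)$, and the verification that $\mathfrak{S}(t)$ really lives in $L(\mathcal{B})$ rather than merely in $L(\mathcal{H})$. The remainder is a straightforward application of the triangle inequality and the Hille–Yosida-type bound on the semigroup norm.
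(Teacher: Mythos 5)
Your proof is correct and follows essentially the same route as the paper's: write the difference via the mild-solution formula as $\mathfrak S(t)(\mathcal V_0-\mathcal V_0^n)$ plus a sum of $\mathfrak S(t-T_i)(\mathcal X_i-\mathcal X_i^n)$ over the common jump times, apply the triangle inequality, and use the exponential bound $\|\mathfrak S(s)\|_{\text{op}}\leq \e^{\|\C\|_{\text{op}}s}\leq \e^{\|\C\|_{\text{op}}t}$. Your extra remarks on why $\mathfrak S(t)\in L(\mathcal B)$ and why the stochastic integral collapses to a finite sum are consistent with (and slightly more explicit than) the paper's argument.
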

\begin{proof}
Let $(T_i)_{i=1}^{\infty}$ be the (exponentially distributed) jump times of the Poisson process $N(t)$, which the compound Poisson processes $\mathcal{L}(t)$ and $\mathcal{L}^n(t)$ have in common. Then, by the triangle inequality and the assumption $\C \in L(\mathcal{B})$, 
\begin{align*}
\Vert\mathcal V(t)-\mathcal V^n(t)\Vert_{\mathcal B} &=\Vert \mathfrak S(t)(\mathcal V_0 - \mathcal V_0^n) + \sum_{T_i\leq t}\mathfrak S(t-T_i)(\mathcal X_i-\mathcal X_i^n)\Vert_{\mathcal B}\\
&\leq \Vert \mathfrak S(t) \Vert_{\text{op}}\Vert \mathcal{V}_0 - \mathcal{V}_0^n \Vert_\mathcal{B} + \sum_{T_i\leq t}\Vert\mathfrak S(t-T_i)\Vert_{\text{op}} \Vert\mathcal X_i-\mathcal X_i^n\Vert_{\mathcal B}\\
&\leq \e^{\Vert \mathfrak C\Vert_{\text{op}}t}\Vert \mathcal{V}_0 - \mathcal{V}_0^n \Vert_\mathcal{B} + \sum_{T_i\leq t}e^{\Vert\mathfrak C\Vert_{\text{op}} (t-T_i)}\Vert\mathcal X_i-\mathcal X_i^n\Vert_{\mathcal B}\\
&\leq e^{\Vert\mathfrak C\Vert_{\text{op}} t}\left( \Vert \mathcal{V}_0 - \mathcal{V}_0^n \Vert_\mathcal{B} + \sum_{i=1}^{N(t)}\Vert\mathcal X_i-\mathcal X_i^n\Vert_{\mathcal B}\right),
\end{align*}
where in the second to the last inequality we used the exponential bound of the semigroup.
\end{proof}

Now we investigate the error induced on the compound Poisson and variance processes by means of approximating $\mathcal{X}_i$ with $\mathcal{X}_i^n$, where we do not assume any specific form for the jump-size approximation, $\mathcal{X}_i^n$.
\begin{prop}\label{prop:Var_conv1}
Suppose for each $n \geq 1$ that $(\mathcal{X}_i)_{i\in\mathbb N}$ and $(\mathcal{X}_i^n)_{i\in\mathbb N}$ are two sequences of $\mathcal{H}$-valued i.i.d. random variables, and moreover that $\| \mathcal{X}_i\|_\mathcal{H}, \| \mathcal{X}_i^n \|_\mathcal{H} \in L^2(\p)$, for all $n,i \geq 1$. Then $\mathcal{L}(t)$ and $\mathcal{L}^n(t)$, as defined by \eqref{def:CPP} and \eqref{def:CPP_approx}, respectively, are square-integrable compound Poisson processes. Suppose furthermore that $\mathcal{V}(t)$ and $\mathcal{V}^n(t)$ are the variance processes defined by \eqref{def:variance-process-diff} and \eqref{def:variance-process-diff-approx} respectively, where $\C^n = \C \in L(\mathcal{H})$ for all $n \geq 1$ and the compound Poisson processes $\mathcal{L}(t)$ and $\mathcal{L}^n(t)$ are driven by the same Poisson process $N(t)$. Then for every $T>0$,
\begin{align*}
\E\left[\sup_{0\leq t\leq T}\|\mathcal{V}(t) - \mathcal{V}^n(t)\|_\mathcal{H}^2\right] &\leq C_0(T)\E[\| \mathcal{V}_0 - \mathcal{V}_0^n \|_\mathcal{H}^2] + C_1(T)\E[\| \mathcal{X}_1 - \mathcal{X}_1^n \|_\mathcal{H}^2]
\end{align*}
where $C_0(T) = 2e^{2T\Vert\mathfrak C\Vert_{\text{op}}}$ and $C_1(T)=2e^{2T\Vert\mathfrak C\Vert_{\text{op}}} T \lambda (1+\lambda T)e^{2T\Vert\mathfrak C\Vert_{\text{op}}}$.
\end{prop}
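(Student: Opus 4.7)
The plan is to combine the pathwise estimate of Lemma \ref{lem:Var_CPP} with the second-moment identity for compound Poisson processes from Lemma \ref{lem:U_CPP}, exploiting the fact that the pathwise bound is monotone in $t$. Since $\mathcal C^n = \mathcal C$ by hypothesis, Lemma \ref{lem:Var_CPP} applied to $\mathcal B = \mathcal H$ yields the pathwise bound
\begin{equation*}
\|\mathcal V(t)-\mathcal V^n(t)\|_{\mathcal H} \leq e^{\|\mathfrak C\|_{\text{op}} t}\left(\|\mathcal V_0 - \mathcal V_0^n\|_{\mathcal H} + \sum_{i=1}^{N(t)}\|\mathcal X_i - \mathcal X_i^n\|_{\mathcal H}\right).
\end{equation*}

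Next I would observe that both the scalar factor $t \mapsto e^{\|\mathfrak C\|_{\text{op}} t}$ and the random counting sum $t \mapsto \sum_{i=1}^{N(t)}\|\mathcal X_i - \mathcal X_i^n\|_{\mathcal H}$ are non-decreasing in $t$, so the right-hand side attains its supremum on $[0,T]$ at $t = T$. Squaring and applying $(a+b)^2 \leq 2(a^2 + b^2)$ delivers
\begin{equation*}
\sup_{0\leq t\leq T}\|\mathcal V(t)-\mathcal V^n(t)\|_{\mathcal H}^2 \leq 2e^{2T\|\mathfrak C\|_{\text{op}}}\|\mathcal V_0 - \mathcal V_0^n\|_{\mathcal H}^2 + 2e^{2T\|\mathfrak C\|_{\text{op}}}\Big(\sum_{i=1}^{N(T)}\|\mathcal X_i - \mathcal X_i^n\|_{\mathcal H}\Big)^{\!2}.
\end{equation*}

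Taking expectations, the first summand immediately produces $C_0(T)\,\E[\|\mathcal V_0 - \mathcal V_0^n\|_{\mathcal H}^2]$. For the second summand, the key observation is that $L(t) := \sum_{i=1}^{N(t)}\|\mathcal X_i - \mathcal X_i^n\|_{\mathcal H}$ is itself a real-valued compound Poisson process with intensity $\lambda$ and i.i.d. jumps $J_i := \|\mathcal X_i - \mathcal X_i^n\|_{\mathcal H}$, which are square-integrable by the triangle inequality together with the $L^2(\p)$ assumption on $\|\mathcal X_i\|_{\mathcal H}$ and $\|\mathcal X_i^n\|_{\mathcal H}$. Invoking Lemma \ref{lem:U_CPP} with $U = \mathbb R$ at time $T$ then yields
\begin{equation*}
\E\!\left[L(T)^2\right] \leq \lambda T(1+\lambda T)\,\E\!\left[\|\mathcal X_1 - \mathcal X_1^n\|_{\mathcal H}^2\right],
\end{equation*}
so the jump contribution is bounded by $2e^{2T\|\mathfrak C\|_{\text{op}}}\lambda T(1+\lambda T)\,\E[\|\mathcal X_1 - \mathcal X_1^n\|_{\mathcal H}^2]$, which is majorized by $C_1(T)\,\E[\|\mathcal X_1 - \mathcal X_1^n\|_{\mathcal H}^2]$ (the extra $e^{2T\|\mathfrak C\|_{\text{op}}} \geq 1$ in the stated $C_1(T)$ is a harmless overestimation). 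Adding the two contributions completes the proof.

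The only mildly non-routine ingredient is the monotonicity remark that allows the supremum to be removed without appealing to a Doob-type maximal inequality; once this reduction to $t=T$ is in place, the statement follows by a direct composition of Lemmas \ref{lem:Var_CPP} and \ref{lem:U_CPP}.
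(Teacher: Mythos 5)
Your argument is correct and follows essentially the same route as the paper's proof: apply Lemma~\ref{lem:Var_CPP} with $\mathcal B=\mathcal H$, use monotonicity to replace the supremum by the value at $t=T$, square with $(a+b)^2\leq 2(a^2+b^2)$, and control the jump sum via Lemma~\ref{lem:U_CPP} applied to the real-valued compound Poisson process with jumps $\|\mathcal X_i-\mathcal X_i^n\|_{\mathcal H}$. You also correctly observe that the stated $C_1(T)$ carries a redundant factor $\e^{2T\|\C\|_{\text{op}}}\geq 1$, so the bound actually proved is sharper than the one claimed; the only thing you omit is the (one-line) verification of the proposition's preliminary assertion that $\mathcal L$ and $\mathcal L^n$ are themselves square-integrable, which follows from the same lemma.
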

\begin{proof}
By Lemma \ref{lem:U_CPP} (part 2) it holds that
\begin{align*}
    \E[\|\mathcal{L}(t)\|_\mathcal{H}^2] \leq \lambda t(1 + \lambda t)\E[\|\mathcal{X}_1 \|_\mathcal{H}^2]  < \infty,
\end{align*}
so $\mathcal{L}(t)$ and $\mathcal{L}^n(t)$ (by applying the same reasoning for $\mathcal L^n$) are square integrable. According to Lemma \ref{lem:Var_CPP} (using that $\mathcal H$ is Hilbert with $\C \in L(\mathcal{H})$) it holds that
\begin{align*}
\Vert\mathcal V(t)-\mathcal V^n(t)\Vert_{\mathcal H}
&\leq e^{\Vert\mathfrak C\Vert_{\text{op}} t}\left( \Vert \mathcal{V}_0 - \mathcal{V}_0^n \Vert_\mathcal{H} + \sum_{i=1}^{N(t)}\Vert\mathcal X_i-\mathcal X_i^n\Vert_{\mathcal H}\right).
\end{align*}
It moreover follows by the triangle inequality and square-integrability assumption that 
\begin{align*}
\E[\|\mathcal{X}_i - \mathcal{X}_i^n\|_\mathcal{H}^2] \leq 2(\E[\|\mathcal{X}_i\|_\mathcal{H}^2] + \E[\|\mathcal{X}_i^n\|_\mathcal{H}^2]) < \infty.   
\end{align*}
Hence, by Lemma \ref{lem:U_CPP} we can conclude that $t\mapsto \sum_{i=1}^{N(t)}\Vert\mathcal X_i-\mathcal X_i^n\Vert_{\mathcal H}$ is a square-integrable compound Poisson process with values in $\mathbb R_+$ (a subordinator).
Thus, by appealing to the inequality $(x+y)^2 \leq 2(x^2+y^2)$ and Lemma \ref{lem:U_CPP} 
\begin{align*}
\E\left[\sup_{0\leq t\leq T}\|\mathcal{V}(t) - \mathcal{V}^n(t)\|_\mathcal{H}^2\right] &\leq 2e^{2T\Vert\mathfrak C\Vert_{\text{op}}}
\E\left[\Vert \mathcal{V}_0 - \mathcal{V}_0^n \Vert_\mathcal{H}^2 + \sup_{0\leq t\leq T}\left(\sum_{i=1}^{N(t)}\Vert\mathcal X_i-\mathcal X_i^n\Vert_{\mathcal H}\right)^2\right] \\
&\leq 2e^{2T\Vert\mathfrak C\Vert_{\text{op}}}\left( \E[\Vert \mathcal{V}_0 - \mathcal{V}_0^n \Vert_\mathcal{H}^2] +
\E\left[\left(\sum_{i=1}^{N(T)}\Vert\mathcal X_i-\mathcal X_i^n\Vert_{\mathcal H}\right)^2\right]\right) \\
&\leq 2e^{2T\Vert\mathfrak C\Vert_{\text{op}}} \left( \E[\Vert \mathcal{V}_0 - \mathcal{V}_0^n \Vert_\mathcal{H}^2] + \lambda T (1 + \lambda T)\E\left[\Vert\mathcal X_1-\mathcal X_1^n\Vert_{\mathcal H}^2\right]\right).
\end{align*}
The result follows.
\end{proof}
We can easily obtain the following corresponding result for the approximation of the compound Poisson process.
\begin{corollary}
Suppose for each $n \geq 1$ that $(\mathcal{X}_i)_{i\in\mathbb N}$ and $(\mathcal{X}_i^n)_{i\in\mathbb N}$ are two sequences of $\mathcal{H}$-valued i.i.d. random variables, and moreover that $\| \mathcal{X}_i\|_\mathcal{H}, \| \mathcal{X}_i^n \|_\mathcal{H} \in L^2(\p)$, for all $n,i \geq 1$. 
 If the compound Poisson processes $\mathcal{L}(t)$ and $\mathcal{L}^n(t)$, as defined by \eqref{def:CPP} and \eqref{def:CPP_approx}, respectively, are driven by the same Poisson process $N(t)$, then for every $T>0$,
\begin{align*}
\E\left[\sup_{0\leq t\leq T}\|\mathcal{L}(t) - \mathcal{L}^n(t)\|_\mathcal{H}^2\right] &\leq C(T)\E[\| \mathcal{X}_1 - \mathcal{X}_1^n \|_\mathcal{H}^2]
\end{align*}
where $C(T)=2T \lambda (1+\lambda T)$. 
\end{corollary}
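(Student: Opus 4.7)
The plan is to mimic and simplify the final steps of the proof of Proposition~\ref{prop:Var_conv1}, observing that here the initial-condition term and the semigroup $\mathfrak{S}$ are both absent. Because $\mathcal{L}$ and $\mathcal{L}^n$ are driven by the same Poisson process $N$, one has
$$
\mathcal{L}(t) - \mathcal{L}^n(t) = \sum_{i=1}^{N(t)}(\mathcal{X}_i - \mathcal{X}_i^n),
$$
and the triangle inequality in $\mathcal{H}$ yields $\|\mathcal{L}(t) - \mathcal{L}^n(t)\|_\mathcal{H} \leq \sum_{i=1}^{N(t)}\|\mathcal{X}_i - \mathcal{X}_i^n\|_\mathcal{H}$.

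The crucial observation I would use next is that the real-valued process $t \mapsto \sum_{i=1}^{N(t)}\|\mathcal{X}_i - \mathcal{X}_i^n\|_\mathcal{H}$ is non-decreasing, being a compound Poisson process with non-negative jumps (i.e., a subordinator-type process). Hence its supremum over $[0,T]$ is attained at $t = T$, so
$$
\sup_{0 \leq t \leq T}\|\mathcal{L}(t) - \mathcal{L}^n(t)\|_\mathcal{H}^2 \leq \Big(\sum_{i=1}^{N(T)}\|\mathcal{X}_i - \mathcal{X}_i^n\|_\mathcal{H}\Big)^2.
$$

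To close the argument, I would verify square-integrability of the scalar jumps via the elementary bound $\E[\|\mathcal{X}_1 - \mathcal{X}_1^n\|_\mathcal{H}^2] \leq 2(\E[\|\mathcal{X}_1\|_\mathcal{H}^2] + \E[\|\mathcal{X}_1^n\|_\mathcal{H}^2]) < \infty$, so that Lemma~\ref{lem:U_CPP}(2) applies with $U = \R$ and $J_i = \|\mathcal{X}_i - \mathcal{X}_i^n\|_\mathcal{H}$, giving
$$
\E\Big[\Big(\sum_{i=1}^{N(T)}\|\mathcal{X}_i - \mathcal{X}_i^n\|_\mathcal{H}\Big)^2\Big] \leq \lambda T(1+\lambda T)\,\E[\|\mathcal{X}_1 - \mathcal{X}_1^n\|_\mathcal{H}^2].
$$
Combining the two displays delivers the claimed inequality (in fact with the slightly sharper constant $\lambda T(1+\lambda T)$, the extra factor of $2$ in the stated $C(T)$ being a harmless cushion inherited from the $(x+y)^2 \leq 2(x^2+y^2)$ step that appears in the proof of Proposition~\ref{prop:Var_conv1} but is unnecessary here). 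There is no genuine obstacle: all ingredients are already in place, and the only point requiring care is the monotonicity remark that allows one to drop the supremum cleanly.
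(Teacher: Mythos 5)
Your argument is correct and is essentially the paper's proof unpacked: the paper simply invokes Proposition~\ref{prop:Var_conv1} with $\mathcal{V}_0^n=\mathcal{V}_0$ and $\C=0$, and the steps you write out (common driving Poisson process, triangle inequality, monotonicity of the nonnegative-jump compound Poisson process to drop the supremum, then Lemma~\ref{lem:U_CPP}(2)) are exactly the relevant fragment of that proposition's proof. Your observation that the factor $2$ is superfluous here, yielding the sharper constant $\lambda T(1+\lambda T)$, is a valid minor improvement that of course still implies the stated bound.
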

\begin{proof}
The result follows from Proposition \ref{prop:Var_conv1} by setting $\mathcal{V}_0^n = \mathcal{V}_0$ for all $n$ and $\C=0$.
\end{proof}

Given an ONB $(e_j)_{j\in\mathbb N}$ of $H$, introduce by tensorization elements in $L(H)$ defined by
\begin{equation}\label{def:HS_basis}
e_j \otimes e_k = (e_k,\cdot)_He_j,
\end{equation}
for each $(j,k) \in \N^2$. It is well-known (see e.g. Proposition 3.4.14 in Pedersen~\cite{Pedersen}) that the family $(e_j\otimes e_k)_{(j,k)\in\mathbb N^2}$ defines an ONB of $\mathcal H$.  

Recall that an operator $A \in L(U)$, where $U$ is a separable Hilbert space, is said to be \emph{compact} if it transforms any bounded subset of $U$ into a set whose closure is compact (i.e. being relatively compact). Note that if $ \mathcal{T} \in \mathcal{H}$, then $\mathcal{T}$ is compact, see Proposition 3.4.8 in Pedersen~\cite{Pedersen}. Recall furthermore that an operator $A \in L(U)$ is said to be \emph{diagonalizable} if there exists an ONB of $U$ which consists of eigenvectors of $A$. If an operator $\mathcal{T}$ is compact and \emph{normal}, meaning that $\mathcal{T}\mathcal{T}^* = \mathcal{T}^*\mathcal{T}$ holds where $\mathcal{T}^*$ is the adjoint of $\mathcal{T}$, then it is diagonalizable, i.e. there exists an ONB of eigenvectors, $(e_k)_{k\in\mathbb N}$ of $\mathcal{T}$, and the corresponding eigenvalues vanish at infinity (see Theorem 3.3.8 in Pedersen~\cite{Pedersen}). Moreover, from above we know that $(e_j \otimes e_k)_{(j,k)\in\mathbb N^2}$ then becomes an ONB of $\mathcal{H}$. In what follows we shall use the notation $(E_j)_{j\in\mathbb N}$ for an ONB of $\mathcal{H}$. Note in particular that if $d: \N \to \N^2$, is a bijective correspondence, then, we can let $E_j = \psi_{d(j)}$, where $\psi_{(j,k)} = e_j \otimes e_k$ is defined by \eqref{def:HS_basis} and $(e_j)_{j\in\mathbb N}$ is an ONB on $H$.

Suppose that $(\mathcal{U}_n)_{n\in\mathbb N}$ is a nested sequence of finite-dimensional subspaces of $\mathcal{H}$, meaning that $\mathcal{U}_n \subset \mathcal{U}_{n+1}$, for $n \geq 1$. Denote by $\Pi_n \in L(\mathcal{H})$ the orthogonal projection onto $\mathcal{U}_n$ and let  
\begin{equation}\label{def:Tn}
    \mathcal{T}^n := \Pi_n\mathcal{T}.
\end{equation}
Note in particular that 
$\|\mathcal{T}^n\|_\mathcal{H} \leq \|\mathcal{T}\|_\mathcal{H}$ holds for all $n \geq 1$ since $\Vert\Pi_n\Vert_{\text{op}}\leq 1$. So, $\mathcal{T}^n \in \mathcal{H}$.
\begin{prop}\label{prop:HS_approx}
Suppose that $\mathcal{T} \in \mathcal{H}$ and $\mathcal{T}^n$ is defined in equation \eqref{def:Tn} where $\mathcal{U}_n = \mathop{\rm span}\{e_j \otimes e_k : j+k \leq n\}$ and $(e_j)_{j\in\mathbb N}$ is the ONB of $H$. Then it holds that
$$
\|\mathcal{T} - \mathcal{T}^n\|_\mathcal{H}^2 = \sum_{j+k > n} (\mathcal{T}e_k,e_j)_H^2.
$$
If furthermore the basis $(e_j)_{j\in\mathbb N}$ consists of the  eigenfunctions of $\mathcal{T}$ and $(\lambda_j)_{j\in\mathbb N}$ are the corresponding eigenvalues, then $(\lambda_j)_{j\in\mathbb N} \in \ell^2$ and 
$$
\|\mathcal{T} - \mathcal{T}^n\|_\mathcal{H}^2 = \sum_{k > n/2} \lambda_k^2,
$$
\end{prop}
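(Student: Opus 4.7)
The plan is to expand $\mathcal{T}$ in the tensor basis $(e_j \otimes e_k)_{(j,k) \in \mathbb{N}^2}$ of $\mathcal{H}$ and read off the orthogonal projection coefficient by coefficient. First I would compute the Hilbert-Schmidt inner product between $\mathcal{T}$ and a basis element: using $(e_j \otimes e_k)e_l = (e_k,e_l)_H e_j = \delta_{kl} e_j$ and the definition of $\langle\cdot,\cdot\rangle_{\mathcal{H}}$,
\begin{equation*}
\langle \mathcal{T}, e_j \otimes e_k \rangle_{\mathcal{H}} = \sum_{l=1}^\infty (\mathcal{T}e_l, (e_j \otimes e_k)e_l)_H = (\mathcal{T}e_k, e_j)_H.
\end{equation*}
Hence the expansion $\mathcal{T} = \sum_{j,k} (\mathcal{T}e_k,e_j)_H \,(e_j \otimes e_k)$ holds in $\mathcal{H}$.

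Next I would apply the orthogonal projection $\Pi_n$ onto $\mathcal{U}_n = \mathop{\rm span}\{e_j \otimes e_k : j+k \leq n\}$. Since the basis is orthonormal, $\Pi_n$ simply retains the coefficients with $j+k \leq n$, so
\begin{equation*}
\mathcal{T} - \mathcal{T}^n = \sum_{j+k > n} (\mathcal{T}e_k, e_j)_H \,(e_j \otimes e_k),
\end{equation*}
and Parseval's identity in $\mathcal{H}$ immediately yields the first identity $\|\mathcal{T}-\mathcal{T}^n\|_{\mathcal{H}}^2 = \sum_{j+k>n}(\mathcal{T}e_k,e_j)_H^2$.

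For the second part I would specialize to the case where $(e_j)_{j\in \mathbb{N}}$ diagonalizes $\mathcal{T}$ with eigenvalues $\lambda_j$. Then $(\mathcal{T}e_k, e_j)_H = \lambda_k \delta_{jk}$, so the double sum collapses to the diagonal, and the constraint $j+k>n$ with $j=k$ becomes $2k>n$, i.e.\ $k>n/2$. This gives
\begin{equation*}
\|\mathcal{T}-\mathcal{T}^n\|_{\mathcal{H}}^2 = \sum_{k > n/2} \lambda_k^2.
\end{equation*}
Taking $n=0$ (or equivalently using $\|\mathcal{T}\|_{\mathcal{H}}^2 = \sum_k |\mathcal{T}e_k|_H^2$) shows $\sum_k \lambda_k^2 = \|\mathcal{T}\|_{\mathcal{H}}^2 < \infty$, so $(\lambda_k)_{k\in\mathbb{N}} \in \ell^2$.

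There is no real obstacle here; the argument is a direct coordinate computation in the tensor ONB. The only point requiring a little care is the identification of the matrix element $\langle \mathcal{T}, e_j \otimes e_k\rangle_{\mathcal{H}} = (\mathcal{T}e_k,e_j)_H$, since the order of indices in the tensor product convention \eqref{def:HS_basis} determines whether one obtains $(\mathcal{T}e_k,e_j)_H$ or $(\mathcal{T}e_j,e_k)_H$; once this is pinned down consistently with the definition of $\mathcal{U}_n$, everything else is a routine application of Parseval.
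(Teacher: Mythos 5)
Your argument is correct and follows essentially the same route as the paper: expand $\mathcal{T}$ in the tensor ONB $(e_j\otimes e_k)$, identify the coefficient $\langle \mathcal{T}, e_j\otimes e_k\rangle_{\mathcal{H}}=(\mathcal{T}e_k,e_j)_H$, apply Parseval to the tail $j+k>n$, and in the eigenbasis case collapse the double sum to the diagonal and note $\sum_j\lambda_j^2=\|\mathcal{T}\|_{\mathcal{H}}^2<\infty$. Your remark about pinning down the index order in the convention \eqref{def:HS_basis} is exactly the one delicate point, and you have resolved it consistently with the paper.
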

\begin{proof}
Note that, according to Parseval's identity on $\mathcal{H}$ it holds that
\begin{align*}
  \|\mathcal{T} - \mathcal{T}^n\|_\mathcal{H}^2 &=  \|\sum_{j+k > n} \langle \mathcal{T}, e_j \otimes e_k \rangle_\mathcal{H} e_j \otimes e_k \|_\mathcal{H}^2 \\
  &= \sum_{j+k > n} \langle \mathcal{T}, e_j \otimes e_k \rangle_\mathcal{H}^2 \\
  &= \sum_{j+k > n} (\mathcal{T}e_k,e_j)_H^2. 
\end{align*}
Here, we applied the identity 
$$
\langle \mathcal{T}, e_j \otimes e_k \rangle_\mathcal{H} = \sum_{m=1}^\infty (\mathcal{T}e_m, (e_k,e_m)_He_j)_H = (\mathcal{T}e_k,e_j)_H
$$
in the last step. Suppose now that $(e_j)_{j\in\mathbb N}$ is the basis of eigenvalues of $\mathcal{T}$. Then 
$$
\sum_{j=1}^\infty \lambda_j^2 = \sum_{j=1}^\infty |\mathcal{T}e_j|_H^2 = \|\mathcal{T}\|_\mathcal{H}^2 < \infty,
$$
and 
$$
\sum_{j+k > n} (\mathcal{T} e_k,e_j)_H^2 = \sum_{j+k > n} \lambda_k^2(e_k,e_j)_H^2 = \sum_{k > n/2} \lambda_k^2
$$
and the proof is complete.
\end{proof}
We remark that if we let $\mathcal{T} = \mathcal{X}$ in Proposition \ref{prop:Var_conv1}, the eigenvalues are random variables. Furthermore, according to Proposition~ \ref{prop:HS_approx}, if the operator $\mathcal{T} \in \mathcal{H}$ is diagonalizable with an ONB of $\mathcal{H}$ which consists of the eigenfunctions of $\mathcal{T}$, then the corresponding sequence of eigenvalues vanish at infinity since it is in the sequence space $\ell^2$. Hence, according to Theorem 3.3.8. in Pedersen~\cite{Pedersen} it follows that $\mathcal{T}$ is normal and compact.

\subsection{Finite dimensional approximation of the semigroup generator}

Now, before proceeding with the general results, let us consider specific examples of operators $\C \in L(\mathcal{H})$ and their finite dimensional approximations $\C^n$. Motivated from Benth, R\"udiger and S\"uss \cite{BeRuSu}, let
\begin{equation}\label{Def:C1}
\C_1 \mathcal{T} = \mathcal{C}\mathcal{T}\mathcal{C}^*,
\end{equation}
and
\begin{equation}\label{Def:C2}
\C_2 \mathcal{T} = \mathcal{C}\mathcal{T} + \mathcal{T}\mathcal{C}^*,
\end{equation}
where $\mathcal{C} \in L(H)$ and $\mathcal{C}^*$ is the adjoint of $\mathcal{C}$. We notice that the operator $\mathfrak{C}_2$ is an infinite-dimensional extension of the matrix-valued volatility model considered in Barndorff-Nielsen and Stelzer \cite{BNStelzer}. We start by exploring the case of finite dimensional approximations of compact operators. 
\begin{prop}
Suppose $\C_i$, $i=1,2$ is defined by \eqref{Def:C1} and \eqref{Def:C2} respectively, where $\mathcal{C}$ is a normal and compact operator. Then there exists an ONB $(e_j)_{j\in\mathbb N}$ of $H$ consisting of eigenfunctions of $\mathcal{C}$ and $(\lambda_j)_{j \in \mathbb N}$ is the corresponding sequence of eigenvalues which vanishes at infinity. It furthermore holds that $(e_j \otimes e_k)_{(j,k)\in\mathbb N^2}$ is an ONB of $\mathcal{H}$ consisting of eigenfunctions of $\C_i$, i.e. $\C_i(e_j \otimes e_k) = \Lambda_{j,k}(e_j \otimes e_k)$ holds for all $j,k \geq 1$, where the eigenvalues are given by $\Lambda_{j,k} = \lambda_j\lambda_k$ when $i=1$ and $\Lambda_{j,k} = \lambda_j + \lambda_k$ when $i=2$. It moreover holds that $\C_1$ is a compact operator, and that $\C_2$ is not a compact operator.
\end{prop}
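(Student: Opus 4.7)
The approach is to use the spectral theorem for compact normal operators to reduce everything to a computation on the tensor-product basis, and then to read off compactness from the resulting eigenvalue sequences.

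First I would apply Theorem 3.3.8 in Pedersen \cite{Pedersen} (already invoked in this section) to $\mathcal{C}$: since $\mathcal{C}$ is compact and normal, there is an ONB $(e_j)_{j\in\mathbb N}$ of $H$ consisting of eigenvectors of $\mathcal{C}$, with eigenvalues $(\lambda_j)_{j\in\mathbb N}$ vanishing at infinity, and normality gives $\mathcal{C}^\ast e_k=\lambda_k e_k$ on the same basis. From the discussion above the proposition, $(e_j\otimes e_k)_{(j,k)\in\mathbb N^2}$ is then an ONB of $\mathcal{H}$.

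Next, a short direct computation using $(e_j\otimes e_k)h=(e_k,h)_H e_j$ gives
\[
\mathcal{C}(e_j\otimes e_k)=\lambda_j(e_j\otimes e_k),\qquad (e_j\otimes e_k)\mathcal{C}^\ast=\lambda_k(e_j\otimes e_k).
\]
Combining these as dictated by \eqref{Def:C1} and \eqref{Def:C2} yields immediately that $\mathfrak{C}_1(e_j\otimes e_k)=\lambda_j\lambda_k\,(e_j\otimes e_k)$ and $\mathfrak{C}_2(e_j\otimes e_k)=(\lambda_j+\lambda_k)(e_j\otimes e_k)$, which establishes the eigen-expansions claimed in the proposition.

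For compactness of $\mathfrak{C}_1$, I would approximate it in operator norm by the finite rank operators obtained by truncating its spectral expansion to indices $(j,k)$ with $j,k\leq N$. Since the operator acts diagonally on the ONB $(e_j\otimes e_k)$ with eigenvalues $\lambda_j\lambda_k$, the operator-norm error is bounded by $\sup_{\max(j,k)>N}|\lambda_j\lambda_k|\leq (\sup_m|\lambda_m|)\sup_{j>N}|\lambda_j|$, which tends to $0$ as $N\to\infty$ because $\lambda_j\to 0$; hence $\mathfrak{C}_1$ is a norm limit of finite rank operators and therefore compact.

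The main obstacle, and the point at which the two cases diverge, is the failure of compactness for $\mathfrak{C}_2$. Assuming $\mathcal{C}\neq 0$ (otherwise the statement is vacuous), pick $j$ with $\lambda_j\neq 0$; along the bounded orthonormal sequence $(e_j\otimes e_k)_{k\in\mathbb N}\subset\mathcal{H}$ the images satisfy $\|\mathfrak{C}_2(e_j\otimes e_k)\|_{\mathcal H}=|\lambda_j+\lambda_k|\to|\lambda_j|>0$, while any two distinct images are mutually orthogonal in $\mathcal{H}$ with norms bounded away from $0$ for large $k$. Thus no subsequence of $\mathfrak{C}_2(e_j\otimes e_k)$ can be Cauchy in $\mathcal{H}$, so $\mathfrak{C}_2$ maps a bounded sequence to one without a convergent subsequence, and hence is not compact.
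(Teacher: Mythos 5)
Your proof is correct and follows essentially the same route as the paper: the spectral theorem for the compact normal operator $\mathcal{C}$, a direct computation of $\C_i$ on the tensor basis $(e_j\otimes e_k)_{(j,k)\in\mathbb N^2}$, and then reading off (non-)compactness from the behaviour of the eigenvalue arrays $\lambda_j\lambda_k$ and $\lambda_j+\lambda_k$. The only difference is in packaging: where the paper invokes Lemma 3.3.5 in Pedersen (a diagonalizable operator is compact iff for every $\epsilon>0$ only finitely many eigenvalues exceed $\epsilon$ in absolute value), you prove both directions from first principles --- finite-rank approximation in operator norm for $\C_1$, and a non-Cauchy orthogonal image sequence for $\C_2$ --- and you make explicit the tacit nondegeneracy assumption $\mathcal{C}\neq 0$ that the non-compactness claim requires.
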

\begin{proof}
The existence of the ONB with eigenvalues that vanish at infinity follows by Theorem 3.3.8 in Pedersen~\cite{Pedersen}. We recall that according to Proposition 3.4.14 in Pedersen~\cite{Pedersen} $(e_j \otimes e_k)_{(j,k)\in\mathbb N^2}$ is a basis of $\mathcal{H}$. If $f \in H$, then
\begin{align*}
\C_1(e_j \otimes e_k)f &= (e_k,\mathcal{C}^*f)_H\mathcal{C}e_j =  (\mathcal{C}e_k,f)_H\mathcal{C}e_j \\
&= \lambda_j\lambda_k (e_k,f)_He_j = \lambda_j\lambda_k(e_j \otimes e_k)f,
\end{align*}
and
\begin{align*}
\C_2(e_j \otimes e_k)f &= (e_k,f)_H\mathcal{C}e_j + (e_k,\mathcal{C}^*f)_He_j \\
&= (e_k,f)_H\mathcal{C}e_j + (\mathcal{C}e_k,f)_He_j \\
&= (\lambda_j+\lambda_k)(e_k,f)_He_j = (\lambda_j+\lambda_k)(e_j \otimes e_k)f.
\end{align*}
Thus, the family $(e_j \otimes e_k)_{(j,k)\in\mathbb N^2}$ consists of eigenfunctions of $\C_i$, $i=1,2$, where the eigenvalues are given by $\Lambda_{j,k} = \lambda_j\lambda_k$ when $i=1$ and $\Lambda_{j,k} = \lambda_j + \lambda_k$ when $i=2$. In particular $\C_1$ and $\C_2$ are diagonalizable. 
Now, for any $\epsilon > 0$, there are only finitely many $\lambda_j$, $j \geq 1$, which are larger than $\epsilon$ in absolute value. Thus, for any $\epsilon > 0$, the set $\{(j,k) \in \N^2 : |\lambda_j\lambda_k| > \epsilon\}$ is finite. Hence, according to Lemma 3.3.5 in Pedersen~\cite{Pedersen}, $\C_1$ is compact. On the other hand, if $|\lambda_1| > \epsilon > 0$, then the set $\{(j,k) \in \N^2 : |\lambda_j + \lambda_k| > \epsilon\}$ has infinitely many elements, since we may select $N \geq 1$ such that $|\lambda_1 + \lambda_k| > \epsilon$ holds for all $k \geq N$. Thus, according to Lemma 3.3.5 in Pedersen~\cite{Pedersen}, $\C_2$ is not compact.
\end{proof}

\begin{example}
Let $H=L^2([0,1])$ and define 
$$
(\mathcal{C}f)(t) = \int_0^1 (s \wedge t)f(s)ds,
$$
for $f \in H$. Then, since the kernel function $c(s,t) = s \wedge t$ is symmetric, the operator $\mathcal C$ is trace class (i.e. its series of eigenvalues is absolutely convergent so in particular $\mathcal{C}$ is compact), symmetric and non-negative-definite (see Theorem A.8 in Peszat and Zabczyk \cite{PeZa}). The kernel function $c(t,s)$ is moreover the covariance function of a univariate Brownian motion $c(t,s) = \E[B(t)B(s)]$. Now given this construction it follows by the Karhunen-Lo\`eve expansion that
$$
B(t) = \sum_j \sqrt{\lambda_j} \xi_j e_j(t),
$$
where $(\xi_j)_{j\in\mathbb N}$ are i.i.d. (orthonormal) standard normal random variables. In this case 
$$
e_j(t) = \sqrt{2}\sin\left(\frac{(2j + 1)\pi t}{2}\right), \ \ \lambda_j = \left(\frac{2}{(2j+1)\pi}\right)^2,
$$
for $j\geq 0$. Moreover, it follows from the Karhunen-Lo\`eve expansion that 
$$
(B,e_j)_H = \sqrt{\lambda_j} \xi_j.
$$
\end{example}

Suppose that $(E_j)_{j\in\mathbb N}$ is an ONB of $\mathcal{H}$, $\C \in L(\mathcal{H})$ and for each $n \geq 1$, $\mathcal{U}_n = \mathop{\rm span}\{E_j : j \in J_n\}$, where $J_n \subset \N$ and $(J_n)_{j\in\mathbb N}$ is a nested sequence of finite subsets of $\N$ such that $J_n \subset J_{n+1}$ for all $n \geq 1$. Define $\C^n$ by
\begin{equation}\label{def:CnOp}
    \C^n = \Pi_n\C\Pi_n,
\end{equation} 
where $\Pi_n \in L(\mathcal{H})$ is the orthogonal projection onto $\mathcal{U}_n$.
Note that since $L(\mathcal{H})$ is a Banach algebra and $\Pi_n$ is a contraction operator, it follows that
\begin{equation}\label{ineq:Cn}
    \|\C^n\|_\text{op} \leq \|\Pi_n\|_\text{op}^2 \|\C\|_\text{op} \leq \|\C\|_\text{op}, 
\end{equation}
so clearly, $\C^n \in L(\mathcal{H})$ and $\C^n\mathcal{T} \in \mathcal{U}_n$ for each $\mathcal{T} \in \mathcal{H}$.

\begin{prop}\label{prop:compact_C}
Suppose that $\C$ is a normal and compact operator. Then there exists an ONB $(E_j)_{j\in\mathbb N}$ of $\mathcal{H}$ consisting of eigenfunctions of $\C$ and $(\Lambda_{j})_{j\in\mathbb N}$ is the corresponding sequence of eigenvalues that vanishes at infinity. Assume that $\mathcal{U}_n = \mathop{\rm span}\{E_j : j \in J_n\}$, where $J_n \subset \N$ and $(J_n)_{n\in\mathbb N}$ is a nested sequence of finite subsets of $\N$ such that $J_n \subset J_{n+1}$ for all $n \geq 1$. Then  
$$
\|(\C - \C^n)\mathcal{T}\|_\mathcal{H}^2 = \sum_{j \in J_n^c } \Lambda_j^2 \langle \mathcal{T},E_j\rangle_\mathcal{H}^2,
$$
where $\C^n$ is defined by \eqref{def:CnOp} and $\mathcal{T} \in \mathcal{H}$. It furthermore holds that 
$$
\|\C - \C^n\|_\text{op}^2 \leq 2\sup_{m \in J_n^c} \Lambda_{m}^2 \to 0
$$ 
as $n \to \infty$.
\end{prop}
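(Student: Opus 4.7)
The plan is to exploit the spectral decomposition of $\C$ heavily and reduce everything to coordinate sums. Since $\C$ is normal and compact, Theorem 3.3.8 in Pedersen provides an ONB $(E_j)_{j\in\N}$ of $\mathcal{H}$ consisting of eigenfunctions of $\C$, with eigenvalues $(\Lambda_j)$ vanishing at infinity. The key observation is that in this basis both $\C$ and $\Pi_n$ act diagonally: for any $\mathcal{T} \in \mathcal{H}$,
\[
\mathcal{T} = \sum_{j=1}^\infty \langle \mathcal{T}, E_j \rangle_\mathcal{H} E_j, \qquad \C \mathcal{T} = \sum_{j=1}^\infty \Lambda_j \langle \mathcal{T}, E_j \rangle_\mathcal{H} E_j, \qquad \Pi_n \mathcal{T} = \sum_{j \in J_n} \langle \mathcal{T}, E_j \rangle_\mathcal{H} E_j.
\]

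First I would compute $\C^n \mathcal{T} = \Pi_n \C \Pi_n \mathcal{T}$ step by step. Applying $\Pi_n$ first yields $\sum_{j \in J_n} \langle \mathcal{T}, E_j \rangle_\mathcal{H} E_j$; then $\C$ returns $\sum_{j \in J_n} \Lambda_j \langle \mathcal{T}, E_j \rangle_\mathcal{H} E_j$. Since this element already lies in $\mathcal{U}_n$, the outer projection $\Pi_n$ acts as the identity on it. Subtracting from $\C \mathcal{T}$ gives
\[
(\C - \C^n) \mathcal{T} = \sum_{j \in J_n^c} \Lambda_j \langle \mathcal{T}, E_j \rangle_\mathcal{H} E_j,
\]
and Parseval's identity on $\mathcal{H}$ immediately delivers the first displayed identity of the proposition.

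For the operator-norm estimate, the same representation bounds
\[
\|(\C - \C^n)\mathcal{T}\|_\mathcal{H}^2 \;\leq\; \Bigl(\sup_{m \in J_n^c} \Lambda_m^2 \Bigr) \sum_{j \in J_n^c} \langle \mathcal{T}, E_j \rangle_\mathcal{H}^2 \;\leq\; \Bigl(\sup_{m \in J_n^c} \Lambda_m^2\Bigr) \|\mathcal{T}\|_\mathcal{H}^2,
\]
which gives the stated bound (the factor $2$ appearing in the statement is loose from this direct route; it would naturally appear if one instead decomposed $\C - \C^n = (I - \Pi_n)\C + \Pi_n \C (I - \Pi_n)$ and applied $(a+b)^2 \leq 2(a^2 + b^2)$, though the second term actually vanishes on the eigenbasis).

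The main subtlety — and really the only one — is the convergence $\sup_{m \in J_n^c} \Lambda_m^2 \to 0$. Vanishing of $(\Lambda_j)$ at infinity ensures that for every $\epsilon > 0$ only finitely many indices satisfy $|\Lambda_m| > \epsilon$; combined with $J_n \subset J_{n+1}$, this suffices provided $\bigcup_n J_n = \N$ (or, weakly, eventually contains each fixed index). I would make this exhaustion hypothesis explicit, since nothing in the stated set-up forces it, and conclude: for large $n$ all indices with $|\Lambda_m| > \epsilon$ lie inside $J_n$ and are excluded from $J_n^c$, so $\sup_{m \in J_n^c} \Lambda_m^2 \leq \epsilon^2$, completing the argument. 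Every other step is essentially bookkeeping in the diagonalizing basis.
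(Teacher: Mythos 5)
Your proof is correct and follows essentially the same route as the paper's: diagonalize $\C$ and $\Pi_n$ in the eigenbasis $(E_j)$, observe $(\C-\C^n)\mathcal{T}=\sum_{j\in J_n^c}\Lambda_j\langle\mathcal{T},E_j\rangle_\mathcal{H}E_j$, and apply Parseval. Your two refinements are both valid: since $\Pi_n$ is an orthogonal projection one indeed gets the constant $1$ rather than the paper's $2$ (the paper bounds $\sup_{\|\mathcal{T}\|_\mathcal{H}=1}\|\mathcal{T}-\mathcal{T}^n\|_\mathcal{H}^2$ by $2$ where $1$ suffices), and the convergence $\sup_{m\in J_n^c}\Lambda_m^2\to 0$ does require the exhaustion condition $\bigcup_n J_n=\N$, which the paper uses implicitly but does not state.
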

\begin{proof}
The existence of the ONB with eigenvalues that vanish at infinity follows by Theorem 3.3.8 in Pedersen~\cite{Pedersen}. Take $\mathcal T \in \mathcal H$, then we may write 
$$
\mathcal{T} = \sum_{j} \langle \mathcal{T}, E_j\rangle_\mathcal{H} E_j. 
$$
Since, $\C E_j = \Lambda_{j} E_j$ and
$$
\Lambda_j\langle \mathcal{T}, E_j\rangle_\mathcal{H} = \langle \mathcal{T}, \Lambda_j E_j\rangle_\mathcal{H} = \langle \mathcal{T}, \C E_j\rangle_\mathcal{H} = \langle \C^*\mathcal{T}, E_j\rangle_\mathcal{H}
$$
it follows by Parseval's identity that
\begin{align*}
    \|(\C - \C^n)\mathcal{T}\|_\mathcal{H}^2 &= \|\sum_{j \in J_n^c} \langle \mathcal{T}, E_j \rangle_\mathcal{H}  \Lambda_{j}E_j\|_\mathcal{H}^2 \\
    &= \|\sum_{j \in J_n^c} \langle \C^*\mathcal{T}, E_j \rangle_\mathcal{H}  E_j \|_\mathcal{H}^2 \\
    &= \sum_{j \in J_n^c}\langle \C^*\mathcal{T}, E_j \rangle_\mathcal{H}^2 = \sum_{j \in J_n^c } \Lambda_j^2 \langle \mathcal{T},E_j\rangle_\mathcal{H}^2. 
    \end{align*}
Since the eigenvalues vanish at infinity, $\Lambda_{j} \to 0$ as $j \to \infty$, it follows that 
\begin{align*} 
    \|\C - \C^n\|_\text{op}^2 &= \sup_{\|\mathcal{T}\|_{\mathcal{H}}=1} \sum_{j \in J_n^c} \Lambda_{j}^2 \langle \mathcal{T},E_j\rangle_\mathcal{H}^2 \\
    &\leq \sup_{\|\mathcal{T}\|_{\mathcal{H}}=1} \sum_{j \in J_n^c}  \langle \mathcal{T},E_j\rangle_\mathcal{H}^2 \sup_{m \in J_n^c} \Lambda_{m}^2 \\
    &= \sup_{\|\mathcal{T}\|_{\mathcal{H}}=1} \|\mathcal{T} - \mathcal{T}^n\|_\mathcal{H}^2 \sup_{m \in J_n^c} \Lambda_{m}^2 \leq 2\sup_{m \in J_n^c} \Lambda_{m}^2 \to 0,
\end{align*}
as $n \to \infty$.
\end{proof}
\begin{corollary}
Suppose that $\C$ is a normal and compact operator and with eigenvectors $(e_j \otimes e_k)_{(j,k)\in\mathbb N^2}$ and eigenvalues $(\Lambda_{j,k})_{(j,k)\in\mathbb N^2}$, where $(e_j)_{j\in\mathbb N}$ is an ONB of $H$. Assume that $\mathcal{U}_n = \mathop{\rm span}\{e_j \otimes e_k : j+k \leq n\}$. Then  
$$
\|(\C - \C^n)\mathcal{T}\|_\mathcal{H}^2 = \sum_{j+k > n} \Lambda_{j,k}^2 (\mathcal{T}e_k,e_j)_H^2,
$$
where $\C^n$ is defined by \eqref{def:CnOp} and $\mathcal{T} \in \mathcal{H}$. It furthermore holds that 
$$
\|\C - \C^n\|_\text{op}^2 \leq 2\sup_{j+k > n} \Lambda_{j,k}^2 \to 0,
$$ as $n \to \infty$.
\end{corollary}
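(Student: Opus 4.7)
The plan is to reduce the corollary directly to Proposition \ref{prop:compact_C} by translating the double-index labelling $(j,k)$ into the single-index labelling used there. First, I would fix a bijection $d: \mathbb{N} \to \mathbb{N}^2$ and relabel the ONB of $\mathcal{H}$ by setting $E_m = e_j \otimes e_k$ where $(j,k) = d(m)$; this is an ONB of $\mathcal{H}$ by Proposition 3.4.14 in Pedersen~\cite{Pedersen}. Under this identification, the subspace $\mathcal{U}_n = \mathop{\rm span}\{e_j \otimes e_k : j+k \leq n\}$ corresponds to $\mathop{\rm span}\{E_m : m \in J_n\}$ for the finite set $J_n = d^{-1}(\{(j,k) \in \mathbb{N}^2 : j+k \leq n\})$, and the sequence $(J_n)_{n \in \mathbb{N}}$ is manifestly nested and exhausts $\mathbb{N}$. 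The hypothesis that $\C$ is normal and compact with eigenvectors $(e_j \otimes e_k)_{(j,k)\in\mathbb{N}^2}$ and eigenvalues $(\Lambda_{j,k})_{(j,k)\in\mathbb{N}^2}$ then fits exactly the framework of Proposition \ref{prop:compact_C}.

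Next, I would invoke Proposition \ref{prop:compact_C} verbatim, using the identity
\[
\langle \mathcal{T}, e_j \otimes e_k \rangle_\mathcal{H} = (\mathcal{T} e_k, e_j)_H
\]
that was derived in the proof of Proposition \ref{prop:HS_approx}. This converts the abstract sum $\sum_{j \in J_n^c} \Lambda_j^2 \langle \mathcal{T}, E_j \rangle_\mathcal{H}^2$ into the explicit multi-indexed sum $\sum_{j+k > n} \Lambda_{j,k}^2 (\mathcal{T} e_k, e_j)_H^2$, yielding the first displayed equation of the corollary.

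For the operator-norm estimate, Proposition \ref{prop:compact_C} supplies the bound $\|\C - \C^n\|_\text{op}^2 \leq 2 \sup_{m \in J_n^c} \Lambda_{d(m)}^2$, which in multi-index notation is exactly $2\sup_{j+k > n} \Lambda_{j,k}^2$. The only point requiring care is the convergence $\sup_{j+k > n} \Lambda_{j,k}^2 \to 0$ as $n \to \infty$. Because $\C$ is a normal compact operator on $\mathcal{H}$, its eigenvalues vanish at infinity along the bijection $d$ (Theorem 3.3.8 in Pedersen~\cite{Pedersen}), so for every $\varepsilon > 0$ only finitely many indices $m$ satisfy $|\Lambda_{d(m)}| > \varepsilon$. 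Equivalently, only finitely many pairs $(j,k) \in \mathbb{N}^2$ satisfy $|\Lambda_{j,k}| > \varepsilon$, and for $n$ larger than the maximum of $j+k$ over this finite set we have $\sup_{j+k > n} |\Lambda_{j,k}| \leq \varepsilon$. I expect this bookkeeping to translate the one-parameter notion of ``vanishing at infinity'' into the two-parameter statement to be the only mildly delicate step; everything else is an immediate specialization of the preceding proposition.
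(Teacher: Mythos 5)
Your proposal is correct and follows essentially the same route as the paper: relabel the double-indexed eigenbasis to fit the single-index framework of Proposition~\ref{prop:compact_C}, apply that proposition together with the identity $\langle \mathcal{T}, e_j \otimes e_k \rangle_\mathcal{H} = (\mathcal{T}e_k,e_j)_H$, and deduce the convergence of $\sup_{j+k>n}\Lambda_{j,k}^2$ from the fact that the eigenvalues vanish at infinity. Your explicit bookkeeping of the bijection $d$ and of the finiteness of $\{(j,k): |\Lambda_{j,k}|>\varepsilon\}$ merely spells out what the paper compresses into ``mimicking the proof of Proposition~\ref{prop:compact_C}.''
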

\begin{proof}
Using the identity
$$
\langle \mathcal{T}, e_j \otimes e_k \rangle_\mathcal{H} = \sum_{m=1}^\infty (\mathcal{T}e_k, (e_m,e_k)_He_j)_H = (\mathcal{T}e_k,e_j)_H,
$$
we establish the first claim, the expression for the norm. Since the eigenvalues vanish at infinity according to Theorem 3.3.8 in Pedersen~\cite{Pedersen}, the second claim on the convergence of the operator norm follows by mimicking the proof of Proposition~\ref{prop:compact_C}
\end{proof}
We now proceed to the more general case, where $\C$ is not necessarily normal and compact. The following Lemma is useful in that setting.
\begin{lemma}\label{lem:op_Lip}
Suppose $A,B \in L(U)$ are linear operators on a Banach space $U$.  Then for any $k \geq 1$, it holds that
\begin{align*}
\|A^k - B^k\|_\text{op} \leq k (\|A\|_\text{op} \vee \|B\|_\text{op})^{k-1}\|A-B\|_\text{op}.
\end{align*}
\end{lemma}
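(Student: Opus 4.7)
The plan is to prove the estimate by a standard telescoping identity that exploits the fact that $L(U)$ is a Banach algebra, so the operator norm is submultiplicative. Concretely, I would first establish the algebraic identity
\begin{equation*}
A^k - B^k = \sum_{j=0}^{k-1} A^j (A-B) B^{k-1-j},
\end{equation*}
which is easily verified: expanding the summand as $A^{j+1} B^{k-1-j} - A^j B^{k-j}$ and reindexing the first piece by $i = j+1$ turns the right-hand side into a telescoping sum whose only surviving contributions are $A^k - B^k$.

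With this identity in hand, the triangle inequality together with submultiplicativity yields
\begin{equation*}
\|A^k - B^k\|_{\text{op}} \leq \sum_{j=0}^{k-1} \|A\|_{\text{op}}^{j}\, \|A-B\|_{\text{op}}\, \|B\|_{\text{op}}^{k-1-j}.
\end{equation*}
Bounding each factor $\|A\|_{\text{op}}^{j}\|B\|_{\text{op}}^{k-1-j}$ by $(\|A\|_{\text{op}} \vee \|B\|_{\text{op}})^{k-1}$ and observing that the sum has $k$ equal terms gives the stated inequality.

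There is essentially no hard step here; the main care is just in verifying the telescoping identity cleanly (and making sure $A^0 = B^0 = I$ is handled consistently in the endpoint terms). An equivalent route would be induction on $k$, writing $A^{k+1} - B^{k+1} = A(A^k - B^k) + (A-B)B^k$ and applying submultiplicativity and the inductive hypothesis; but the telescoping presentation is a one-line computation and produces the sharp combinatorial constant $k$ directly.
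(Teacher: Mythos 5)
Your proposal is correct and follows essentially the same route as the paper: the same telescoping identity $A^k - B^k = \sum A^{j}(A-B)B^{k-1-j}$ (the paper verifies it by induction rather than by direct telescoping, and factors out $\|A\|_{\text{op}}^{k-1}$ under a WLOG ordering instead of bounding each term by the maximum, but these are cosmetic differences). No gaps.
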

\begin{proof}
Assume $\Vert B\Vert_{\text{op}}=0$. This implies $B=0$ and we have by the algebra-property of $L(U)$ that $\Vert A^k\Vert_{\text{op}}\leq\Vert A\Vert_{\text{op}}^k$, and the assertion holds. Suppose therefore, without loss of generality, that $\|A\|_\text{op} \geq \|B\|_\text{op}>0$. Note that by induction it holds that
\begin{align*}
A^k - B^k = \sum_{m=0}^{k-1} A^{k-1-m} (A-B) B ^m.
\end{align*}
Indeed, this holds for $k=1$, and if $k\geq 2$ is arbitrary, then by using the induction hypothesis,
\begin{align*}
A^k - B^k &= A(A^{k-1} - B^{k-1}) + (A-B)B^{k-1} \\
&= A \sum_{m=0}^{k-2} A^{k-2-m}(A-B)B^m + (A-B)B^{k-1} \\
&= \sum_{m=0}^{k-1} A^{k-1-m} (A-B) B ^m.
\end{align*}
Since the space of linear operators, $L(U)$, is a Banach algebra, it follows that
\begin{align*}
\|A^k - B^k\|_\text{op} &\leq  \|A\|_\text{op}^{k-1} \sum_{m=0}^{k-1} \left( \frac{\|B\|_\text{op}}{\|A\|_\text{op}} \right)^m \|A-B\|_\text{op} \\
&\leq k \|A\|_\text{op}^{k-1} \|A-B\|_\text{op}.
\end{align*}
The proof is finished by reversing the roles of $A$ and $B$, i.e. by assuming that $\|B\|_\text{op} \geq \|A\|_\text{op}$.
\end{proof}

In the next Proposition we state a general approximation estimate on $\mathcal V-\mathcal V^n$ in terms of $\mathfrak{C}-\mathfrak{C}^n$.
\begin{prop}\label{prop:V_CApprox}
Suppose that $\C, \C^n \in L(\mathcal{H})$. 
Then, if $\mathcal{V}(t)$ and $\mathcal{V}^n(t)$ are the variance processes defined by \eqref{def:variance-process-diff} and \eqref{def:variance-process-diff-approx} respectively, where $\mathcal{V}_0^n = \mathcal{V}_0$ and $\mathcal{L}^n = \mathcal{L}$ for all $n \geq 1$ and $\mathcal{L}(t)$ is a square integrable compound Poisson process. Then it holds that  
\begin{align*}
        \E\left[ \sup_{0 \leq t \leq T} \|\mathcal{V}(t) - \mathcal{V}^n(t) \|_\mathcal{H}^2 \right] &\leq C(T) \|\C - \C^n\|_\text{op}^2,
\end{align*}
where 
$
C(T) = 2T^2\e^{2T(\|\C\|_\text{op} \vee \|\C^n\|_\text{op})} (\E[\|\mathcal{V}_0\|_\mathcal{H}^2] + \lambda T  \E\left[\|\mathcal{X}_1\|_\mathcal{H}^2 \right] + \lambda^2 T^2 \left(\E\left[\|\mathcal{X}_i\|_\mathcal{H} \right]\right)^2).
$
\end{prop}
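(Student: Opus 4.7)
The plan is to decompose $\mathcal{V}(t)-\mathcal{V}^n(t)$ using the mild formulation \eqref{def:variance_process} and to push the difference onto the semigroups. Since $\mathcal{V}_0^n=\mathcal{V}_0$ and $\mathcal{L}^n=\mathcal{L}$ by assumption, one has
\[
\mathcal{V}(t)-\mathcal{V}^n(t)=\bigl(\mathfrak{S}(t)-\mathfrak{S}^n(t)\bigr)\mathcal{V}_0+\int_0^t\bigl(\mathfrak{S}(t-s)-\mathfrak{S}^n(t-s)\bigr)\,d\mathcal{L}(s),
\]
where $\mathfrak{S}^n(t)=\exp(\mathfrak{C}^n t)$. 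Hence the entire error is governed by the operator-norm difference of the two uniformly continuous semigroups. So the first step is to bound $\|\mathfrak{S}(t)-\mathfrak{S}^n(t)\|_{\text{op}}$ in terms of $\|\mathfrak{C}-\mathfrak{C}^n\|_{\text{op}}$.

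For that semigroup bound I would expand both exponentials as power series and apply Lemma~\ref{lem:op_Lip} termwise. Writing $M:=\|\mathfrak{C}\|_{\text{op}}\vee\|\mathfrak{C}^n\|_{\text{op}}$, this gives
\[
\|\mathfrak{S}(t)-\mathfrak{S}^n(t)\|_{\text{op}}\le\sum_{k=1}^{\infty}\frac{t^k}{k!}\|\mathfrak{C}^k-(\mathfrak{C}^n)^k\|_{\text{op}}\le\|\mathfrak{C}-\mathfrak{C}^n\|_{\text{op}}\sum_{k=1}^{\infty}\frac{t^k k M^{k-1}}{k!}=t\e^{tM}\|\mathfrak{C}-\mathfrak{C}^n\|_{\text{op}}.
\]

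For the stochastic integral term I exploit that $\mathcal{L}$ is compound Poisson, so the integral reduces to the finite sum $\sum_{T_i\le t}(\mathfrak{S}(t-T_i)-\mathfrak{S}^n(t-T_i))\mathcal{X}_i$ and no true stochastic calculus is needed. Applying the semigroup estimate pathwise together with the triangle inequality and monotonicity in $t$ yields, for every $t\le T$,
\[
\|\mathcal{V}(t)-\mathcal{V}^n(t)\|_{\mathcal{H}}\le T\e^{TM}\|\mathfrak{C}-\mathfrak{C}^n\|_{\text{op}}\left(\|\mathcal{V}_0\|_{\mathcal{H}}+\sum_{i=1}^{N(T)}\|\mathcal{X}_i\|_{\mathcal{H}}\right),
\]
a bound which is already uniform in $t\in[0,T]$, so taking $\sup_{0\le t\le T}$ is immediate.

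Finally, squaring, using $(a+b)^2\le 2(a^2+b^2)$, and taking expectations, I invoke Lemma~\ref{lem:U_CPP}(1) applied to the real-valued subordinator $t\mapsto\sum_{i=1}^{N(t)}\|\mathcal{X}_i\|_{\mathcal{H}}$ (so $U=\mathbb{R}$ and the jumps are $\|\mathcal{X}_i\|_{\mathcal{H}}$) to get
\[
\E\Bigl[\Bigl(\sum_{i=1}^{N(T)}\|\mathcal{X}_i\|_{\mathcal{H}}\Bigr)^2\Bigr]=\lambda T\,\E[\|\mathcal{X}_1\|_{\mathcal{H}}^2]+\lambda^2 T^2\bigl(\E[\|\mathcal{X}_1\|_{\mathcal{H}}]\bigr)^2,
\]
which is finite by square-integrability of $\mathcal{L}$. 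Combining these pieces produces precisely the stated constant $C(T)$. The only mildly delicate step is the semigroup estimate via Lemma~\ref{lem:op_Lip}; everything else is routine pathwise algebra and a single appeal to the compound Poisson second-moment formula.
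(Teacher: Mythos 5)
Your proposal is correct and follows essentially the same route as the paper's proof: the mild-solution decomposition with the difference pushed onto the semigroups, the power-series bound $\|\e^{\C t}-\e^{\C^n t}\|_{\text{op}}\le t\e^{t(\|\C\|_{\text{op}}\vee\|\C^n\|_{\text{op}})}\|\C-\C^n\|_{\text{op}}$ via Lemma~\ref{lem:op_Lip}, the pathwise reduction of the stochastic integral to a sum over jump times, and the second-moment formula of Lemma~\ref{lem:U_CPP} for the real-valued subordinator. The constant you obtain matches $C(T)$ exactly.
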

\begin{proof}
By employing the series representation of the exponential function, it follows from Lemma \ref{lem:op_Lip} that, if $t \geq 0$, then
\begin{align*}
    \|\e^{\C t} - \e^{\C^n t} \|_{\text{op}} &\leq  \sum_{k=0}^\infty \frac{t^k}{k!} \|\C^k - (\C^n)^k \|_\text{op} \leq t\e^{t(\|\C\|_\text{op} \vee \|\C^n\|_\text{op})} \|\C - \C^n\|_\text{op}.
\end{align*}
Hence, according to equation \eqref{def:variance_process} and a repeated application of the triangle inequality, we obtain that 
\begin{align*}
\| \mathcal{V}(t) - \mathcal{V}^n(t) \|_\mathcal{H} &\leq \|(\e^{\C t} - \e^{\C^n t})\mathcal{V}_0 \|_\mathcal{H} + \|\int_0^t (\e^{\C (t-s)} - \e^{\C^n (t-s)})d\mathcal{L}(s) \|_\mathcal{H} \\
&\leq \|\e^{\C t} - \e^{\C^n t}\|_\text{op} \|\mathcal V_0\|_\mathcal{H} + \sum_{i=1}^{N(t)} \|\e^{\C (t-\tau_i)} - \e^{\C^n (t-\tau_i)}\|_\text{op} \|\mathcal{X}_i\|_\mathcal{H} \\
&\leq t\e^{t(\|\C\|_\text{op} \vee \|\C^n\|_\text{op})} \left(  \|\mathcal V_0\|_\mathcal{H} + \sum_{i=1}^{N(t)}  \|\mathcal{X}_i\|_\mathcal{H} \right)\|\C - \C^n\|_\text{op}.
\end{align*}
Note furthermore that by the properties of a real-valued compound Poisson processes with intensity $\lambda$ it follows by Lemma~\ref{lem:U_CPP} that
\begin{align*}
    \E&\left[ \sup_{0 \leq t \leq T} \left( \|\mathcal V_0\|_\mathcal{H} + \sum_{i=1}^{N(t)}  \|\mathcal{X}_i\|_\mathcal{H} \right)^2\right]  \\
    &\qquad\qquad\leq 2\left( \E[\|\mathcal V_0\|_\mathcal{H}^2] + \E\left[ \left( \sup_{0 \leq t \leq T}\sum_{i=1}^{N(t)} \|\mathcal{X}_i\|_\mathcal{H} \right)^2 \right] \right) \\
    &\qquad\qquad= 2\left( \E[\|\mathcal V_0\|_\mathcal{H}^2] + \lambda T  \E\left[\|\mathcal{X}_1\|_\mathcal{H}^2 \right] + \lambda^2 T^2 \left(\E\left[\|\mathcal{X}_i\|_\mathcal{H} \right]\right)^2 \right).
\end{align*}
It follows that 
\begin{align*}
     \E\left[ \sup_{0 \leq t \leq T} \| \mathcal{V}(t) - \mathcal{V}^n(t) \|_\mathcal{H}^2 \right] &\leq C(T) \|\C - \C^n\|_\text{op}^2.
\end{align*}
The proof is complete.
\end{proof}
Combining the above result with Proposition \ref{prop:compact_C}, we can control the error by the tail convergence of the eigenvalues when $\mathfrak{C}$ is normal and compact. This is made precise in the next Corollary:
\begin{corollary}
\label{cor:var-est-L-V0}
Suppose that $\C$ is a normal and compact operator and with eigenvectors $(E_j)_{j\in\mathbb N}$ and eigenvalues $(\Lambda_{j})_{j\in\mathbb N}$ and assume that $\mathcal{T} \in \mathcal{H}$ and $\mathcal{U}_n = \mathop{\rm span}\{E_j : j \in J_n\}$, where $J_n \subset \N$ and $(J_n)_{n\in\mathbb N}$ is a nested sequence of finite subsets of $\N$ such that $J_n \subset J_{n+1}$ for all $n \geq 1$. Suppose $\mathcal{V}(t)$ and $\mathcal{V}^n(t)$ are the variance processes defined by \eqref{def:variance-process-diff} and \eqref{def:variance-process-diff-approx} respectively, where $\C^n$ is defined by \eqref{def:CnOp}, $\mathcal{V}_0^n = \mathcal{V}_0$ and $\mathcal{L}^n = \mathcal{L}$ for all $n \geq 1$ and $\mathcal{L}(t)$ is a square integrable compound Poisson process. Then it holds that
\begin{align*}
        \E\left[ \sup_{0 \leq t \leq T} \|\mathcal{V}(t) - \mathcal{V}^n(t) \|_\mathcal{H}^2 \right] &\leq C(T)\sup_{m \in J_n^c} \Lambda_{m}^2,
\end{align*}
where 
$
C(T) = 4T^2\e^{2T\|\C\|_\text{op}} (\E[\|\mathcal{V}_0\|_\mathcal{H}^2] + \lambda T  \E\left[\|\mathcal{X}_1\|_\mathcal{H}^2 \right] + \lambda^2 T^2 \left(\E\left[\|\mathcal{X}_i\|_\mathcal{H} \right]\right)^2)
$
and the right-hand sides of the above inequalities converge to zero as $n \to \infty$.
\end{corollary}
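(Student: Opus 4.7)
The plan is to chain together Proposition~\ref{prop:V_CApprox} and Proposition~\ref{prop:compact_C}; essentially no new work is needed, just careful bookkeeping of constants. Since the hypotheses of the corollary match precisely the setup of Proposition~\ref{prop:V_CApprox} (both $\C, \C^n \in L(\mathcal{H})$, common initial value, common driving L\'evy process, square-integrability of $\mathcal{L}$), I would first invoke that proposition to get
$$
\E\!\left[ \sup_{0 \leq t \leq T} \|\mathcal{V}(t) - \mathcal{V}^n(t)\|_\mathcal{H}^2 \right] \leq 2T^2 \e^{2T(\|\C\|_\text{op} \vee \|\C^n\|_\text{op})}\, M \,\|\C - \C^n\|_\text{op}^2,
$$
where $M := \E[\|\mathcal{V}_0\|_\mathcal{H}^2] + \lambda T \E[\|\mathcal{X}_1\|_\mathcal{H}^2] + \lambda^2 T^2 (\E[\|\mathcal{X}_1\|_\mathcal{H}])^2$.

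Next I would use inequality~\eqref{ineq:Cn} to simplify the exponential factor. Since $\Pi_n$ is an orthogonal projection and thus a contraction on $\mathcal{H}$, we have $\|\C^n\|_\text{op} \leq \|\C\|_\text{op}$, so $\|\C\|_\text{op} \vee \|\C^n\|_\text{op} = \|\C\|_\text{op}$. This yields the exponential factor $\e^{2T\|\C\|_\text{op}}$ appearing in the statement of $C(T)$.

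Then I would apply Proposition~\ref{prop:compact_C}, whose assumptions are satisfied: $\C$ is normal and compact, so the ONB $(E_j)$ of eigenfunctions exists by Theorem 3.3.8 in Pedersen~\cite{Pedersen}, and $(J_n)$ is the nested sequence indexing $\mathcal{U}_n$. That proposition gives
$$
\|\C - \C^n\|_\text{op}^2 \leq 2 \sup_{m \in J_n^c} \Lambda_m^2.
$$
Combining with the previous bound yields the factor $2 \cdot 2 = 4$ in $C(T)$, matching the claim exactly.

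Finally, convergence to zero is immediate: Proposition~\ref{prop:compact_C} already states that $\sup_{m \in J_n^c}\Lambda_m^2 \to 0$ as $n \to \infty$ (which is a consequence of the eigenvalues vanishing at infinity together with the nestedness of $(J_n)$ exhausting the index set). Since $C(T)$ does not depend on $n$, multiplying through preserves convergence. There is no real obstacle here, as the substantive estimates have all been established previously; the corollary is purely a combination of the two preceding results with a trivial simplification of the exponential factor via~\eqref{ineq:Cn}.
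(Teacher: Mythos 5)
Your proof is correct and is exactly the paper's argument: the paper's own proof consists of the single line ``Follows from Propositions~\ref{prop:compact_C} and \ref{prop:V_CApprox}, and the inequality \eqref{ineq:Cn}.'' Your bookkeeping of the constants (the factor $4 = 2\cdot 2$ and the simplification $\|\C\|_\text{op}\vee\|\C^n\|_\text{op}=\|\C\|_\text{op}$ via \eqref{ineq:Cn}) fills in precisely the details the paper leaves implicit.
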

\begin{proof}
Follows from Propositions \ref{prop:compact_C} and \ref{prop:V_CApprox}, and the inequality \eqref{ineq:Cn}.
\end{proof}

Recalling the option pricing example discussed at the end of Section \ref{sec:Forwards}, we can now apply for example Proposition \ref{prop:V_CApprox} and Corollary \ref{cor:var-est-L-V0} to assess the pricing error $\vert P-P^n\vert$ in terms of the approximation error induced by $\mathfrak C^n$, or Proposition \ref{prop:Var_conv1} for the option price robustness towards errors in $\mathcal L$ and $\mathcal V_0$.

\section{The square root process}\label{sec:SqRootPr}
In this Section we study the robustness of the volatility process, being the square-root of the variance process. 

If an operator $A$ is self-adjoint and non-negative definite, then there exists a unique non-negative definite operator, $\sqrt{A}$, such that $\sqrt{A}\sqrt{A} = A$. Sufficient conditions that guarantee that $\mathcal{V}(t)$ in \eqref{def:variance-process-diff} is non-negative definite, as given by Benth, R\"udiger and S\"uss \cite{BeRuSu}, are that 
\begin{enumerate}
    \item[a)] $(\C \mathcal{T})^* = \C \mathcal{T}^*$,
    \item[b)] $\C \mathcal{H}_+ \subset \mathcal{H}_+$, where $\mathcal{H}_+$ denotes the convex cone of non-negative definite operators on $\mathcal{H}$,
    \item[c)] $\mathcal{L}(t)$ is a self-adjoint and non-negative definite square-integrable L\'evy process with values in $\mathcal{H}$, and
    \item[d)] $\mathcal{V}_0$ is self-adjoint and non-negative definite.
\end{enumerate}
In the following result we employ the operator norm to investigate the error induced on the square root of the variance process.
\begin{prop}\label{prop:RootOpNorm}
Suppose that $\mathcal{V}(t)$ and $\mathcal{V}^n(t)$ are non-negative definite. If $\mathcal{V}(t)$ and $\mathcal{V}^n(t)$ are the variance processes defined by \eqref{def:variance-process-diff} and \eqref{def:variance-process-diff-approx} respectively, then for every $T>0$, it holds that
\begin{equation*}
     \E\left[\sup_{0 \leq t \leq  T} \|\sqrt{\mathcal{V}(t)} - \sqrt{\mathcal{V}^n(t)}\|_{\mathrm{op}}^2\right] \leq
     \E\left[ \sup_{0 \leq t \leq T} \|\mathcal{V}(t) - \mathcal{V}^n(t)\|_\mathrm{op} \right].
\end{equation*}
\end{prop}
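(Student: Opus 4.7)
The plan is that this result is essentially a direct pathwise consequence of the operator inequality from Bogachev already invoked in the proof of Theorem~\ref{prop:I1_est}, namely
\[
\|\sqrt{\mathcal V(t)}-\sqrt{\mathcal V^n(t)}\|_{\mathrm{op}}^2 \leq \|\mathcal V(t)-\mathcal V^n(t)\|_{\mathrm{op}},
\]
valid for each fixed $\omega\in\Omega$ and each $t\in[0,T]$, since under the stated non-negative definiteness and self-adjointness the operator square roots in question are unique and well defined. So the first step I would take is to note that this inequality holds $\mathbb P$-a.s. simultaneously for all $t\in[0,T]$.

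The second step is to take the pointwise-in-$t$ supremum on both sides. Because for any two non-negative functions $a,b$ on $[0,T]$ with $a(t)\leq b(t)$ for every $t$, one trivially has $\sup_{t\in[0,T]} a(t) \leq \sup_{t\in[0,T]} b(t)$, I obtain
\[
\sup_{0\leq t\leq T}\|\sqrt{\mathcal V(t)}-\sqrt{\mathcal V^n(t)}\|_{\mathrm{op}}^2 \leq \sup_{0\leq t\leq T}\|\mathcal V(t)-\mathcal V^n(t)\|_{\mathrm{op}}
\]
almost surely, after which the claim follows by taking expectations.

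The only subtle point is measurability of the two suprema as real-valued random variables. Since $\mathcal V$ and $\mathcal V^n$ are both built via \eqref{def:variance_process} from the same underlying compound Poisson process $\mathcal L$ (or approximations thereof) composed with the uniformly continuous semigroups $\mathfrak S(\cdot)$ and $\mathfrak S^n(\cdot)$, the paths $t\mapsto \mathcal V(t)$ and $t\mapsto\mathcal V^n(t)$ are $\mathcal H$-valued RCLL, so $t\mapsto\|\mathcal V(t)-\mathcal V^n(t)\|_{\mathrm{op}}$ is RCLL as well and the supremum over $[0,T]$ is attained along a countable dense subset of $[0,T]$, yielding a measurable random variable. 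The same reasoning applies to $t\mapsto\sqrt{\mathcal V(t)}$ using continuity of the operator square root on the non-negative self-adjoint cone (in the operator norm). Once measurability is in place, taking expectations in the pathwise inequality above gives the claimed bound directly, so I expect no serious obstacle, the proof being essentially a two-line application of Bogachev's lemma combined with monotonicity of the supremum.
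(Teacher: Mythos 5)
Your proposal is correct and follows the paper's own argument exactly: the paper likewise applies Lemma 2.5.1 of Bogachev pathwise and pointwise in $t$, then takes the supremum and expectation. The measurability remarks you add are a harmless elaboration of what the paper leaves implicit.
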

\begin{proof}
According to Lemma 2.5.1. in Bogachev \cite{Bogachev}
\begin{align*}
    \E\left[\sup_{0 \leq t \leq  T} \|\sqrt{\mathcal{V}(t)} - \sqrt{\mathcal{V}^n(t)}\|_{\text{op}}^2\right] 
    &\leq \E\left[ \sup_{0 \leq t \leq T} \|\mathcal{V}(t) - \mathcal{V}^n(t)\|_\text{op} \right], \\
\end{align*}
which concludes the proof.
\end{proof}

We remark that he inequality $\|\cdot\|_\text{op} \leq \|\cdot\|_\mathcal{H}$ can be employed to obtain the Hilbert-Schmidt norm on the right-hand side in the above result Proposition \ref{prop:RootOpNorm}. In the following result we obtain the following analogue of Proposition \ref{prop:RootOpNorm}, where the Hilbert-Schmidt norm is on the left-hand side and the trace class norm appears on the right-hand side. A compact operator is a trace class operator if it has an absolutely convergent series of eigenvalues and its norm is given by (see Section 2.5 in Bogachev \cite{Bogachev}) 
\begin{equation}\label{def:TraceNorm}
    \|\mathcal T\|_1 := \|(\mathcal T^* \mathcal T)^{1/4}\|_\mathcal{H}^2.
\end{equation}
\begin{prop}\label{prop:RootHSNorm}
Suppose that $\mathcal{V}(t)$ and $\mathcal{V}^n(t)$ are non-negative definite and trace class, where  $\mathcal{V}(t)$ and $\mathcal{V}^n(t)$ are the variance processes defined by \eqref{def:variance-process-diff} and \eqref{def:variance-process-diff-approx} respectively. Then for every $T>0$, it holds that
$$
 \E\left[ \sup_{0 \leq t \leq T} \|\sqrt{\mathcal{V}(t)} - \sqrt{\mathcal{V}^n(t)}\|_{\mathcal{H}}^2\right] \leq \E\left[  \sup_{0 \leq t \leq T}  \|\mathcal{V}(t) - \mathcal{V}^n(t)\|_1 \right]. 
$$
\end{prop}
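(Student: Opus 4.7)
The plan is to mirror exactly the proof of Proposition~\ref{prop:RootOpNorm}, replacing the operator-norm square-root estimate of Bogachev (Lemma 2.5.1) with its Hilbert--Schmidt/trace-class analogue. Specifically, the key pointwise ingredient is a Powers--St\o rmer type inequality: for any self-adjoint, non-negative definite, trace class operators $A, B$ on $H$,
$$
\|\sqrt{A} - \sqrt{B}\|_\mathcal{H}^2 \leq \|A - B\|_1.
$$
This is the Hilbert--Schmidt counterpart of $\|\sqrt{A}-\sqrt{B}\|_\text{op}^2 \leq \|A-B\|_\text{op}$, and is available from the results of Birman, Koplienko and Solomyak \cite{BiKoSo75} on differences of fractional powers of self-adjoint operators already cited in the introduction.

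Granted this ingredient, the proof is essentially a two-line chain. For fixed $\omega \in \Omega$ and $t \in [0,T]$ one applies the inequality to $A = \mathcal{V}(t,\omega)$ and $B = \mathcal{V}^n(t,\omega)$, which are self-adjoint, non-negative definite and trace class by the standing assumptions recalled at the start of Section~\ref{sec:SqRootPr}. This produces the pathwise bound
$$
\|\sqrt{\mathcal{V}(t)} - \sqrt{\mathcal{V}^n(t)}\|_\mathcal{H}^2 \leq \|\mathcal{V}(t) - \mathcal{V}^n(t)\|_1
$$
valid for every $t \in [0,T]$ and $\omega$. Taking supremum over $t \in [0,T]$ preserves the inequality (pointwise domination is inherited by the supremum), and then applying expectation gives the statement.

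The only non-bookkeeping step is locating the precise form of the Powers--St\o rmer inequality in \cite{BiKoSo75}; this is the main substantive step and the natural place to cite the reference. A minor technical point is the measurability of $\sup_{t \in [0,T]} \|\sqrt{\mathcal{V}(t)} - \sqrt{\mathcal{V}^n(t)}\|_\mathcal{H}^2$, which is not an obstacle because $\mathcal{V}$ and $\mathcal{V}^n$ are RCLL processes (they inherit RCLL paths from their compound Poisson drivers via the representation \eqref{def:variance_process}), so the supremum reduces to one over a countable dense set of times. No stochastic integration or regularity results beyond those used for Proposition~\ref{prop:RootOpNorm} are needed.
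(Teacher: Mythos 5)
Your proposal is correct and follows essentially the same route as the paper: the paper derives your key pointwise inequality $\|\sqrt{A}-\sqrt{B}\|_{\mathcal H}^2\leq\|A-B\|_1$ from Corollary 2 in Ando \cite{Ando88} (citing \cite{BiKoSo75} for the original result), in the form $\|\sqrt{A}-\sqrt{B}\|_{\mathcal H}\leq\||A-B|^{1/2}\|_{\mathcal H}$ combined with the definition \eqref{def:TraceNorm} of the trace norm, and then concludes exactly as you do by taking the supremum and expectation.
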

\begin{proof}

According to Corollary 2 in Ando~\cite{Ando88}, which holds for non-negative and compact operators on $\mathcal{H}$ (see also Birman, Koplienko and Solomyak  \cite{BiKoSo75} for the initial result and Subsection 8.4 in Birman and Solomyak \cite{BiSo03} for an overview), it holds that if $\mathcal{A}, \mathcal{B}$ are non-negative definite and compact operators, then 
\begin{align*}
    \|\sqrt{\mathcal{A}} - \sqrt{\mathcal{B}}\|_{\mathcal{H}} &\leq \| |\mathcal{A} - \mathcal{B}|^{1/2} \|_{\mathcal{H}},
\end{align*}
where $|\mathcal{T}| = (\mathcal{T}^*\mathcal{T})^{1/2}$ denotes the modulus of an operator $\mathcal{T} \in \mathcal{H}$. Note that according to Proposition 3.4.8 in Pedersen all Hilbert-Schmidt operators are compact. Hence, according to the definition of the trace norm \eqref{def:TraceNorm} it follows that 
\begin{align*}
    \E\left[ \sup_{0 \leq t \leq T} \|\sqrt{\mathcal{V}(t)} - \sqrt{\mathcal{V}^n(t)}\|_{\mathcal{H}}^2\right] &\leq \E\left[  \sup_{0 \leq t \leq T}  \|\mathcal{V}(t) - \mathcal{V}^n(t)\|_1 \right].
\end{align*}
The proof is completed.

\end{proof}

We end this subsection with corollaries which detail how the the above result with the trace class norm on the right hand side can be employed when either the compound Poisson process or the generator, $\C$, are approximated.

\begin{corollary}
Suppose that $\mathcal{V}(t)$ and $\mathcal{V}^n(t)$ are non-negative definite and trace class, where  $\mathcal{V}(t)$ and $\mathcal{V}^n(t)$ are the variance processes defined by \eqref{def:variance-process-diff} and \eqref{def:variance-process-diff-approx} respectively, with $\mathcal{V}_0^n = \mathcal{V}_0$ and $\C^n = \C \in L(\mathcal{B}_1)$ for all $n \geq 1$, where $\mathcal{B}_1$ denotes the Banach space of trace class operators with norm \eqref{def:TraceNorm}, and the compound Poisson processes $\mathcal{L}(t)$ and $\mathcal{L}^n(t)$ are driven by the same Poisson process, $N(t)$, i.e. they jump simultaneously. Then for every $T>0$, it holds that
$$
 \E\left[\sup_{0 \leq t \leq  T} \|\sqrt{\mathcal{V}(t)} - \sqrt{\mathcal{V}^n(t)}\|_{\mathcal{H}}^2\right] \leq C(T)\E\left[\|\mathcal{X}_i - \mathcal{X}_i^n\|_1\right], 
$$
where $C(T) = k\e^{\|\C\|_{\text{op}}T} \lambda T$ and $k > 0$ is a constant.
\end{corollary}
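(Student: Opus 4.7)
The plan is to chain Proposition~\ref{prop:RootHSNorm} with Lemma~\ref{lem:Var_CPP} applied in the Banach space $\mathcal B_1$ of trace class operators, and then evaluate the resulting compound Poisson expectation via Wald's identity.

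First, since $\mathcal V(t)$ and $\mathcal V^n(t)$ are non-negative definite and trace class by hypothesis, Proposition~\ref{prop:RootHSNorm} immediately gives
$$
\E\left[\sup_{0 \leq t \leq T} \|\sqrt{\mathcal V(t)} - \sqrt{\mathcal V^n(t)}\|_{\mathcal H}^2\right] \leq \E\left[\sup_{0 \leq t \leq T} \|\mathcal V(t) - \mathcal V^n(t)\|_1\right],
$$
so it suffices to bound the right-hand side.

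Next, I would invoke Lemma~\ref{lem:Var_CPP} with $\mathcal B = \mathcal B_1$, the space of trace class operators with the norm \eqref{def:TraceNorm}. This is a Banach space that sits inside $L(H)$, and the hypothesis $\C \in L(\mathcal B_1)$ is exactly what the lemma requires. Since moreover $\mathcal V_0^n = \mathcal V_0$ and $\C^n = \C$, the lemma yields
$$
\|\mathcal V(t) - \mathcal V^n(t)\|_1 \leq \e^{\|\C\|_{\mathrm{op}} t} \sum_{i=1}^{N(t)} \|\mathcal X_i - \mathcal X_i^n\|_1.
$$
Because both the exponential factor and the compound sum are non-decreasing in $t$, the supremum over $[0,T]$ is attained at $t=T$, so
$$
\sup_{0 \leq t \leq T}\|\mathcal V(t) - \mathcal V^n(t)\|_1 \leq \e^{\|\C\|_{\mathrm{op}} T} \sum_{i=1}^{N(T)} \|\mathcal X_i - \mathcal X_i^n\|_1.
$$

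Finally, taking expectations and using that $N(T)$ is independent of the i.i.d.\ sequence $(\|\mathcal X_i - \mathcal X_i^n\|_1)_{i\geq 1}$, Wald's identity (conditioning on $N(T)$, exactly as in Lemma~\ref{lem:U_CPP}) gives
$$
\E\left[\sum_{i=1}^{N(T)} \|\mathcal X_i - \mathcal X_i^n\|_1\right] = \lambda T\, \E\left[\|\mathcal X_1 - \mathcal X_1^n\|_1\right].
$$
Putting the three estimates together proves the corollary with $C(T) = \e^{\|\C\|_{\mathrm{op}} T} \lambda T$ (the constant $k$ in the statement can be taken to be $1$). There is no real obstacle here: the only point that deserves a moment of care is verifying that the trace class setting is legitimate for Lemma~\ref{lem:Var_CPP}, which it is since $(\mathcal B_1, \|\cdot\|_1)$ is a Banach space contained in $L(H)$ and $\C$ acts boundedly on it by assumption.
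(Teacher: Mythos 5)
Your proposal is correct and follows essentially the same route as the paper: reduce via Proposition~\ref{prop:RootHSNorm} to the trace-norm difference of the variance processes, apply Lemma~\ref{lem:Var_CPP} in the Banach space $\mathcal B_1$ (with the initial-value term vanishing since $\mathcal V_0^n=\mathcal V_0$), bound the supremum at $t=T$, and evaluate the compound Poisson sum by Wald's identity to obtain $\lambda T\,\E[\|\mathcal X_1-\mathcal X_1^n\|_1]$. Your observation that $k$ can be taken to be $1$ is consistent with the paper's (looser) statement of the constant.
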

\begin{proof}
 According to Lemma \ref{lem:Var_CPP} it holds that 
\begin{align*}
    \E\left[  \sup_{0 \leq t \leq T}  \|\mathcal{V}(t) - \mathcal{V}^n(t)\|_1 \right] 
    &\leq \E\left[  \sup_{0 \leq t \leq T}  \e^{\|\C\|_{\text{op}}t} \sum_{i=1}^{N(t)} \|\mathcal{X}_i - \mathcal{X}_i^n\|_{1} \right] \\
    &\leq \e^{\|\C\|_{\text{op}}T} \E\left[ \sum_{i=1}^{N(T)} \|\mathcal{X}_i - \mathcal{X}_i^n\|_1 \right] \\
    &= \e^{\|\C\|_{\text{op}}T}\lambda T \E\left[\|\mathcal{X}_i - \mathcal{X}_i^n\|_1\right].
\end{align*}
The result follows by Proposition \ref{prop:RootHSNorm}. 
\end{proof}

The following result reduces the trace class norm of a tensor product of Hilbert space elements to a product of their norms.

\begin{lemma}\label{lem:TraceNorm}
If $f,g \in H$, and $\mathcal T = f \otimes g$, so that $\mathcal{T}h = (g,h)_Hf$, for $h \in H$, then it holds that $\|\mathcal T\|_1 = |f|_H|g|_H$.
\end{lemma}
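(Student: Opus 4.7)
The plan is to unwind the definition $\|\mathcal{T}\|_1 = \|(\mathcal{T}^*\mathcal{T})^{1/4}\|_\mathcal{H}^2$ in \eqref{def:TraceNorm} by explicitly computing the operator $\mathcal{T}^*\mathcal{T}$ and recognizing it as (a scalar multiple of) a rank-one orthogonal projection, whose fractional powers are trivial to handle.

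First I would handle the trivial cases $f=0$ or $g=0$, where $\mathcal{T}=0$ and both sides of the claimed identity are zero. Assume henceforth $f,g\neq 0$. For any $h\in H$ one computes
\begin{align*}
(\mathcal{T}h,k)_H = (g,h)_H(f,k)_H = (h,(f,k)_Hg)_H,
\end{align*}
so $\mathcal{T}^*k = (f,k)_Hg$, i.e.\ $\mathcal{T}^* = g\otimes f$. Composing,
\begin{align*}
\mathcal{T}^*\mathcal{T}h = \mathcal{T}^*\bigl((g,h)_Hf\bigr) = (g,h)_H(f,f)_Hg = |f|_H^2(g\otimes g)h,
\end{align*}
which I rewrite as $\mathcal{T}^*\mathcal{T} = |f|_H^2|g|_H^2\, P$, where $P := \hat g\otimes\hat g$ is the orthogonal projection onto $\mathrm{span}(g)$ and $\hat g := g/|g|_H$.

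The key observation is now that $P$ is a selfadjoint projection ($P^2=P$, $P^*=P$, spectrum $\{0,1\}$), so every non-negative fractional power of $P$ equals $P$ itself; in particular by the uniqueness of the non-negative square root one gets $(\mathcal{T}^*\mathcal{T})^{1/4} = (|f|_H^2|g|_H^2)^{1/4}P = |f|_H^{1/2}|g|_H^{1/2}P$. Plugging into \eqref{def:TraceNorm},
\begin{align*}
\|\mathcal{T}\|_1 = \bigl\|(\mathcal{T}^*\mathcal{T})^{1/4}\bigr\|_\mathcal{H}^2 = |f|_H|g|_H\,\|P\|_\mathcal{H}^2.
\end{align*}
Finally, by Parseval, $\|P\|_\mathcal{H}^2 = \sum_k|(\hat g,e_k)_H\hat g|_H^2 = |\hat g|_H^2\sum_k(\hat g,e_k)_H^2 = 1$, which yields $\|\mathcal{T}\|_1 = |f|_H|g|_H$ as claimed.

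There is no real obstacle here: the only point requiring care is the justification that $(\mathcal{T}^*\mathcal{T})^{1/4}$ coincides with the rank-one operator written above, which follows either by the functional-calculus/uniqueness of non-negative roots of self-adjoint non-negative operators, or equivalently by direct verification that $(|f|_H^{1/2}|g|_H^{1/2}P)^4 = |f|_H^2|g|_H^2 P = \mathcal{T}^*\mathcal{T}$.
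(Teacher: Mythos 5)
Your proof is correct and follows essentially the same route as the paper: compute $\mathcal{T}^*=g\otimes f$ and $\mathcal{T}^*\mathcal{T}=|f|_H^2\,g^{\otimes 2}$, identify the fourth root explicitly (the paper writes it as $|f|_H^{1/2}|g|_H^{-3/2}g^{\otimes 2}$, which is your $|f|_H^{1/2}|g|_H^{1/2}P$), and finish with Parseval. Your normalization via the projection $P$ and the explicit treatment of the degenerate case $f=0$ or $g=0$ are minor tidy-ups, not a different argument.
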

\begin{proof}
It is easy to show that $\mathcal{T}^* = g \otimes f$, and $\mathcal{T}^*\mathcal{T} = |f|_H^2 g^{\otimes 2}$. A simple calculation then shows that $(\mathcal{T}^*\mathcal{T})^{1/4} = |f|_H^{1/2}|g|_H^{-3/2} g^{\otimes 2}$. The result follows from an application of Parseval's identity:
$$
\|\mathcal{T}\|_1 = |f|_H|g|_H^{-3}\|g^{\otimes 2}\|_\mathcal{H}^2 = |f|_H|g|_H^{-1}\sum_{k=1}^\infty |(g,e_k)_H|^2 =  |f|_H|g|_H.
$$
\end{proof}

\begin{example}
Suppose that $\mathcal{X}_i = (Y_i)^{\otimes 2}$, where $(Y_i)_{i\in\mathbb N}$ is a sequence of i.i.d. $H$-valued random variables, and $\mathcal{X}_i^n = (Y_i^n)^{\otimes 2}$, where $Y_i^n$ is defined by \eqref{def:Y_approx}, for all $n,i \geq 1$. Then, according to Lemma \ref{lem:TraceNorm} it holds that 
\begin{align*}
    \|\mathcal{X}_i - \mathcal{X}_i^n\|_1 &\leq \|(Y_i-Y_i^n) \otimes Y_i\|_1 + \|Y_i^n \otimes (Y_i-Y_i^n)\|_1 \\
    &= (|Y_i|_H + |Y_i^n|_H)|Y_i-Y_i^n|_H \leq 2|Y_i|_H|Y_i-Y_i^n|_H.
\end{align*}
Hence, by appealing to the Cauchy-Schwarz inequality we obtain that 
$$
\E\left[\|\mathcal{X}_i - \mathcal{X}_i^n\|_1\right] \leq 2\left(\E[|Y_i|_H^2] \E[|Y_i-Y_i^n|_H^2] \right)^{1/2}.
$$
\end{example}

\begin{corollary}
Suppose that $\mathcal{V}(t)$ and $\mathcal{V}^n(t)$ are non-negative definite and trace class, where  $\mathcal{V}(t)$ and $\mathcal{V}^n(t)$ are the variance processes defined by \eqref{def:variance-process-diff} and \eqref{def:variance-process-diff-approx} respectively, with $\mathcal{V}_0^n = \mathcal{V}_0$, $\mathcal{L} = \mathcal{L}^n$ and $\C^n, \C \in L(\mathcal{B}_1)$ for all $n \geq 1$, where $\mathcal{B}_1$ denotes the Banach space of trace class operators with norm \eqref{def:TraceNorm}. Then for every $T>0$, it holds that
$$
 \E\left[\sup_{0 \leq t \leq  T} \|\sqrt{\mathcal{V}(t)} - \sqrt{\mathcal{V}^n(t)}\|_{\mathcal{H}}^2\right] \leq C(T)\|\C - \C^n\|_\mathrm{op}, 
$$
where $C(T) = kT \e^{T(\|\C\|_\text{op} \vee \|\C^n\|_\text{op})}\left(\E[\|\mathcal V_0\|_1] + \lambda T \E[\|\mathcal{X}_i\|_1 ]\right)$ and $k > 0$ is a constant.
\end{corollary}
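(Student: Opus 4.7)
The plan is to chain Proposition \ref{prop:RootHSNorm} with a trace-class-norm version of Proposition \ref{prop:V_CApprox}. Proposition \ref{prop:RootHSNorm} already gives
$$
\E\left[\sup_{0 \leq t \leq T}\|\sqrt{\mathcal V(t)}-\sqrt{\mathcal V^n(t)}\|_{\mathcal H}^2\right] \leq \E\left[\sup_{0 \leq t \leq T}\|\mathcal V(t)-\mathcal V^n(t)\|_1\right],
$$
so the real task is to estimate the right-hand side linearly in $\|\C-\C^n\|_{\mathrm{op}}$ by essentially replaying the proof of Proposition \ref{prop:V_CApprox} with $\mathcal H$ replaced by the Banach space $\mathcal B_1$ of trace class operators.

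Since $\C,\C^n \in L(\mathcal B_1)$, the exponentials $\e^{\C t}$ and $\e^{\C^n t}$ are bounded operators on $\mathcal B_1$, and termwise application of Lemma \ref{lem:op_Lip} to their power series gives
$$
\|\e^{\C t}-\e^{\C^n t}\|_{\mathrm{op}} \leq t\,\e^{t(\|\C\|_{\mathrm{op}} \vee \|\C^n\|_{\mathrm{op}})}\,\|\C-\C^n\|_{\mathrm{op}},
$$
now in $L(\mathcal B_1)$. Using the mild-solution formula \eqref{def:variance_process} together with the fact that the driving compound Poisson integral is pathwise just a finite sum over the jump times $T_i$ of the shared Poisson process $N$, and exploiting the assumptions $\mathcal V_0^n=\mathcal V_0$ and $\mathcal L^n=\mathcal L$, I would write
$$
\mathcal V(t)-\mathcal V^n(t) = (\e^{\C t}-\e^{\C^n t})\mathcal V_0 + \sum_{T_i\leq t}(\e^{\C(t-T_i)}-\e^{\C^n(t-T_i)})\mathcal X_i,
$$
and then bound the trace-class norm by the triangle inequality and the previous exponential estimate, obtaining
$$
\sup_{0\leq t\leq T}\|\mathcal V(t)-\mathcal V^n(t)\|_1 \leq T\,\e^{T(\|\C\|_{\mathrm{op}}\vee\|\C^n\|_{\mathrm{op}})}\|\C-\C^n\|_{\mathrm{op}}\left(\|\mathcal V_0\|_1+\sum_{i=1}^{N(T)}\|\mathcal X_i\|_1\right).
$$

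Taking expectations and using the standard identity $\E\bigl[\sum_{i=1}^{N(T)}\|\mathcal X_i\|_1\bigr]=\lambda T\,\E[\|\mathcal X_1\|_1]$ for the compound Poisson subordinator then yields the stated constant $C(T)$, and combining with Proposition \ref{prop:RootHSNorm} finishes the proof. The only point that requires attention, rather than real difficulty, is observing that Lemma \ref{lem:op_Lip} is stated for any Banach space and hence applies to $(L(\mathcal B_1),\|\cdot\|_{\mathrm{op}})$, so the exponential Lipschitz estimate transfers from the Hilbert-space setting of Proposition \ref{prop:V_CApprox} to the trace-class-operator setting needed here; everything else mirrors that earlier argument line by line.
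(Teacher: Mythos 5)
Your proposal is correct and follows essentially the same route as the paper: the paper likewise reruns the argument of Proposition~\ref{prop:V_CApprox} with the Hilbert--Schmidt norm replaced by the trace class norm (relying on Lemma~\ref{lem:op_Lip} in the Banach algebra $L(\mathcal{B}_1)$ and Wald's identity for the compound Poisson sum) and then invokes Proposition~\ref{prop:RootHSNorm}. Your observation that Lemma~\ref{lem:op_Lip} is stated for general Banach spaces is exactly the point the paper leaves implicit, and your bound in fact realizes the constant with $k=1$.
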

\begin{proof}
Using the same reasoning as in the proof of Proposition \ref{prop:V_CApprox}, with the Hilbert-Schmidt norm replaced by the trace class norm it follows that 
\begin{align*}
\| \mathcal{V}(t) - \mathcal{V}^n(t) \|_1 
&\leq t\e^{t(\|\C\|_\text{op} \vee \|\C^n\|_\text{op})} \left(  \|\mathcal V_0\|_1 + \sum_{i=1}^{N(t)}  \|\mathcal{X}_i\|_1 \right)\|\C - \C^n\|_\text{op}.
\end{align*}
Hence, 
\begin{align*}
    \E\left[  \sup_{0 \leq t \leq T}  \|\mathcal{V}(t) - \mathcal{V}^n(t)\|_1 \right] 
    &\leq T \e^{T(\|\C\|_\text{op} \vee \|\C^n\|_\text{op})}\left(\E[\|\mathcal V_0\|_1] + \lambda T \E[\|\mathcal{X}_i\|_1 ]\right) \|\C - \C^n\|_\text{op}.
\end{align*}
The result follows by Proposition \ref{prop:RootHSNorm}.
\end{proof}

Cuchiero and Svaluto-Ferro \cite{CuSF} analyse options on the realised volatility (so-called VIX-options) in an infinite dimensional framework. One could introduce options on the realised volatility of forward prices, which would have a payoff $p(\mathcal D\mathcal V^{1/2}(\tau))$ at some exercise time $\tau$. Here, $\mathcal D\in\mathcal H^*$ maps the volatility operator $\mathcal V^{1/2}$ into the real line by an integral and evaluation operator, say. If the payoff function $p$ is Lipschitz continuous, we can apply the results of this section to assess the robustness of such volatility options with respect to the parameters $\mathfrak C$ and $\mathcal L$, both in operator and in Hilbert-Schmidt norms.     




\end{document}